\def\ie{\emph{i.e., }}
\def\eg{\emph{e.g., }}
\def\S{\mathbb S}
\def\DD{\mathbb D}
\def\R{\mathbb R}
\def\Z{\mathbb Z}
\def\cZ{\mathcal Z}
\numberwithin{equation}{section}
\newtheorem*{problem}{Open Problem}
\newtheorem*{question}{Question}
\newtheorem*{main}{Main Theorem}
\newtheorem{theorem}{Theorem}[section]
\newtheorem{lemma}[theorem]{Lemma}
\newtheorem{prop}[theorem]{Proposition}
\newtheorem{cor}[theorem]{Corollary}
 \theoremstyle{remark}
\newtheorem{remark}[theorem]{Remark}
\theoremstyle{remark}
\renewcommand\labelenumi{\theenumi.}
\renewcommand*\env@matrix[1][*\c@MaxMatrixCols c]{%
  \hskip -\arraycolsep
  \let\@ifnextchar\new@ifnextchar
  \array{#1}}
\begin{document}
\author{Andrey Gogolev}
\address{Binghamton University, Binghamton, NY 13902.}
\email{agogolev@math.binghamton.edu}
\author{Jean-Fran\c{c}ois Lafont$^\ast$}
\address{Ohio State University, Columbus, OH 43210.}
\email{jlafont@math.ohio-state.edu}
\title[Absence of Anosov diffeomorphisms]{Aspherical products which do not support Anosov diffeomorphisms}
\thanks{$^\ast$A.G. was partially supported by NSF grant DMS-1266282. J.-F.L. was partially supported by NSF grant
DMS-1510640. \\
MSC Primary 37D20; Secondary 55R10, 57R19, 37C25.}
\begin{abstract}
We show that the product of infranilmanifolds with certain aspherical closed manifolds do not support Anosov diffeomorphisms. As a special case, we obtain that products of a nilmanifold and negatively curved manifolds of dimension at least 3 do not support Anosov diffeomorphisms.
\end{abstract}
\date{}
 \maketitle

\section{Introduction}
Let $M$ be a smooth closed $n$-dimensional Riemannian manifold.
Recall that a diffeomorphism $f$ is called {\it Anosov} if there
exist constants $\lambda \in (0,1)$ and $C>0$ along with a
$df$-invariant splitting $TM=E^s\oplus E^u$ of the tangent bundle of
$M$, such that for all $m \ge 0$
\begin{multline}
\label{def_anosov}
\qquad\|df^mv\|\le C\lambda^m\|v\|,\;v\in E^s,\; \\
\qquad\shoveleft{\|df^{-m}v\|\le C\lambda^{m}\|v\|,\;v\in E^u.
\hfill}
\end{multline}

The invariant distributions $E^s$ and $E^u$ are called the {\it stable} and {\it unstable } distributions.
If either fiber of $E^s$ or $E^u$ has dimension $k$ with $k\le\lfloor n/2\rfloor$ then $f$ is
called a {\it codimension $k$} Anosov diffeomorphism. An Anosov diffeomorphism is called {\it transitive} if there exist a point whose orbit is dense in $M$.

All currently known examples of Anosov diffeomorphisms are conjugate to affine automorphisms of infranilmanifolds. { It is a famous classification problem, which dates back to Anosov and Smale, to decide whether there are other manifolds which carry Anosov diffeomorphisms. In particular, Smale~\cite{Sm} asked whether manifolds which support Anosov diffeomorphisms must be covered by Euclidean spaces. }
It is a more restrictive but still very interesting problem to classify Anosov diffeomorphisms on such manifolds. The goal 
of our current work is to evince certain coarse geometric obstructions (presence of negative curvature) to the existence 
of Anosov diffeomorphisms on manifolds which are covered by Euclidean spaces. 

%Yano~\cite{Y} showed that closed negatively curved manifolds do not support transitive Anosov diffeomorphisms. His proof is based on properties of Gromov's bounded cohomology. We give a different, simpler proof of a more general result using Mostow ridity. 

%\begin{theorem}\label{thm_out_finite}
%Let M be aspherical? covered by $R^n$? with finite Out then (Maybe also $Out(M)=Z$? Examples of such?) doesn't support Anosov diffeos.
%\end{theorem}

%\marginpar{A: what about nilmanifold bundles $N\to E\to M$? Are there non-trivial bundles like this, I forgot. J: Probably not. I'll try and find a reference or easy argument.}
\begin{main}\label{thm_nil} 
Let $N$ be a closed infranilmanifold and let $M$ be a smooth aspherical manifold whose fundamental group 
 $\Gamma=\pi_1(M)$ has the following three properties: (i) $\Gamma$ is Hopfian, (ii) $Out(\Gamma)$ is finite, 
 and (iii) the intersection of all maximal nilpotent subgroups of $\Gamma$ is trivial. Then $M\times N$ does not 
 support Anosov diffeomorphisms.
\end{main}
%{\color{red} 
We will give a brief overview of known results in Section \ref{sec:previous-results}, and discuss background
material in Sections \ref{Sec_Lef}, \ref{sec_yano}, and \ref{sec_hyp_sets}. 
The proof of the {\bf Main Theorem} will be given in 
Section \ref{sec:proof-of-theorem}. Finally, in Section \ref{sec:conclusion}, we discuss further open problems. We
also provide some concrete classes of manifolds satisfying our {\bf Main Theorem}, for instance:

\begin{cor}\label{main-corollary}
Let $N$ be any closed infranilmanifold, and 
let $M_1, \ldots ,M_k$ be a collection of closed smooth aspherical manifolds  of dimension $\geq 3$, {
each of which satisfies one of the following properties:
\begin{enumerate}
\item it has Gromov hyperbolic fundamental group; or 
\item it is an irreducible higher rank locally symmetric space with no local $\mathbb H^2$-factors or $\mathbb R$-factors.
\end{enumerate}}
\noindent Then the product $M_1\times \cdots M_k \times N$ does not support any Anosov diffeomorphisms.
\end{cor}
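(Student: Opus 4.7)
The plan is to apply the {\bf Main Theorem} to the pair $(M, N)$, where $M := M_1 \times \cdots \times M_k$ is a smooth closed aspherical manifold (asphericity is preserved under direct products) with fundamental group $\Gamma := \prod_{i=1}^{k} \pi_1(M_i)$. It suffices to verify that $\Gamma$ satisfies hypotheses (i), (ii), and (iii). I would proceed in two stages: first verifying each property for each factor $\Gamma_i := \pi_1(M_i)$, then transferring the three properties to the direct product $\Gamma$.

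For the per-factor verification, if $\Gamma_i$ is Gromov hyperbolic then it is torsion-free (since $M_i$ is aspherical), so (i) follows from Sela's theorem that torsion-free Gromov hyperbolic groups are Hopfian; (ii) follows from the Paulin and Bestvina--Feighn finiteness results, using that $\Gamma_i$ admits no essential splitting over a virtually cyclic subgroup (a consequence of Poincar\'e duality for closed aspherical manifolds of dimension at least $3$); and (iii) reduces to the facts that nilpotent subgroups of a torsion-free hyperbolic group are cyclic and that distinct maximal cyclic subgroups in a non-elementary hyperbolic group have trivial intersection. If instead $\Gamma_i$ is a torsion-free irreducible higher rank lattice (with the hypotheses on the symmetric space ensuring the relevant rigidity), then (i) follows from linearity via Malcev's theorem (finitely generated linear groups are residually finite, hence Hopfian), (ii) from Margulis super-rigidity, and (iii) from the structure of unipotent radicals of $\mathbb{Q}$-parabolic subgroups, distinct ones of which intersect trivially.

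To transfer the three properties to $\Gamma$, I would exploit that each $\Gamma_i$ is centerless and that the factors are rigidly determined: any homomorphism from a higher rank lattice to a word hyperbolic group has trivial image (by Margulis' normal subgroup theorem, since hyperbolic groups contain no $\mathbb{Z}^2$), and analogous rigidity constrains homomorphisms between non-isomorphic factors of the same type. Together with centerlessness, this makes each $\Gamma_i$ a characteristic factor of $\Gamma$ (up to permutation of isomorphic factors), so $\mathrm{Aut}(\Gamma)$ is a finite extension of $\prod_i \mathrm{Aut}(\Gamma_i)$; this gives (ii), and Hopfianness (i) likewise descends, since a surjective endomorphism of $\Gamma$ must restrict to surjective---hence, by factor-wise Hopfianness, bijective---endomorphisms on each characteristic factor. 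Finally, for (iii) I would use that every maximal nilpotent subgroup of a direct product has the form $N_1 \times \cdots \times N_k$ with each $N_i$ maximal nilpotent in $\Gamma_i$ (projections of nilpotent subgroups are nilpotent, and products of nilpotent groups are nilpotent), so the intersection of all such subgroups is the product of the factor-wise intersections, and therefore trivial. The main obstacle is the rigidity step needed to conclude that the factors are characteristic, namely ruling out exotic cross-factor homomorphisms; once this is established, the descent of (i)--(iii) is essentially formal, and the {\bf Main Theorem} then yields the conclusion.
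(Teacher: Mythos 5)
Your overall architecture matches the paper's: verify (i)--(iii) for each factor $\Gamma_i$, then descend the three properties to the product $\Gamma = \prod \Gamma_i$. The per-factor verifications are largely the ones the paper uses (Sela for Hopfian in the hyperbolic case; Mal'cev for linear groups; Mostow for finite Out in the lattice case; Paulin/Bestvina--Feighn for finite Out in the hyperbolic case). Two small divergences there: for property (iii) in the higher-rank case you appeal to the structure of unipotent radicals of $\mathbb{Q}$-parabolics, but maximal nilpotent subgroups of a higher-rank lattice are not limited to unipotent ones (e.g.\ centralizers of regular semisimple elements are abelian), so your characterization is incomplete; the paper sidesteps this by observing that the intersection of all maximal nilpotent subgroups is a normal nilpotent subgroup and then invoking Margulis' normal subgroup theorem directly. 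For property (ii) in the hyperbolic case you assert absence of essential splittings over virtually cyclic subgroups ``by Poincar\'e duality,'' whereas the paper goes via Bowditch's local cut-point criterion and Bestvina's result that $\partial^\infty\Gamma$ is a homology $(n-1)$-manifold; your shortcut needs justification.

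The genuine gap, which you yourself flag as ``the main obstacle,'' is the transfer of (i) and (ii) to the product. You want to conclude that each $\Gamma_i$ is characteristic (up to permutation), but the argument you sketch only rules out cross-maps from higher-rank lattices into hyperbolic factors, and waves at ``analogous rigidity'' for the remaining cases. There is no such generic rigidity: nothing stops, a priori, a surjective endomorphism $\phi$ of $\Gamma$ from sending a hyperbolic factor $\Gamma_r$ into $\Gamma_t$ with infinite but proper image, or from having two different factors $\Gamma_r,\Gamma_s$ both map nontrivially into the same $\Gamma_t$. The paper's actual argument at this point is the key technical content of the proof: write $\phi|_{\Gamma_i} = (\phi_{i1},\dots,\phi_{ik})$ with images $\Lambda_{ij} = \phi_{ij}(\Gamma_i) \trianglelefteq \Gamma_j$ (normality coming from surjectivity of $\phi$ and normality of $\Gamma_i$), note that for $r\neq s$ the images $\Lambda_{rt}$ and $\Lambda_{st}$ commute inside $\Gamma_t$, and then force triviality of all but one of them via Margulis' NST (lattice case) or via a lemma on centralizers in torsion-free hyperbolic groups (Lemma~\ref{centralizer}: the centralizer of a non-cyclic subgroup is trivial, and the centralizer of an infinite cyclic subgroup is virtually cyclic). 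That centralizer lemma and the commuting-images observation are exactly what you need to close the gap; without them, the claim that the factors are characteristic, and hence the descent of Hopfianness and finiteness of Out, is unsupported.
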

%}

Since closed negatively curved Riemannian manifolds are aspherical and have Gromov hyperbolic fundamental 
group, the corollary shows that any product of such manifolds of dimension $\ge 3$ with a nilmanifold does not 
support Anosov diffeomorphisms.

\vskip 10pt

\noindent  {\bf Acknowledgements.} We would like to acknowledge extremely helpful communications with 
Tom Farrell, Sheldon Newhouse and Federico Rodriguez Hertz. We also would like to thank Rafael Potrie and the anonymous referee for useful communications.

\section{Previous results}\label{sec:previous-results}

In this section, we provide the reader with a brief overview of the current state of the classification problem.

A full answer to the classification question was achieved under certain additional assumptions:

\begin{itemize}
\item Franks and Newhouse~\cite{Fr, N} proved that codimension one Anosov diffeomorphisms can only exist on manifolds which are homeomorphic to tori.
\item Franks and Manning~\cite{Fr2, Mann2} proved that Anosov diffeomorphisms on infranilmanifolds are conjugate to affine Anosov diffeomorphisms. In particular, Anosov diffeomorphisms on nilmanifolds are conjugate to hyperbolic automorphisms of nilmanifolds.
\item Brin and Manning~\cite{Br, BrM} showed that ``sufficiently pinched" Anosov diffeomorphisms can only exist on infranilmanifolds.
 \end{itemize}
 
In light of these results, one is interested in the classification of Anosov automorphisms of nilmanifolds. In general, such a classification seems to be hopeless. However it was achieved in low dimension (see~\cite{LW}, corrections in~\cite{D} and references therein).
 
A number of classes of manifolds are known to {\it not} support any Anosov diffeomorphisms, or to not support 
Anosov diffeomorphisms of a certain type:
 
 \begin{itemize}
\item Hirsch ~\cite{H} showed that certain manifolds with polycyclic fundamental group do not admit Anosov diffeomorphisms. In particular, he showed that mapping tori of hyperbolic toral automorphisms do not carry Anosov diffeomorphisms. 
\item Shiraiwa~\cite{Sh} noted that an Anosov diffeomorphism with orientable stable (or unstable) distribution cannot induce the identity map on homology in all dimensions. It follows, for example, that spheres, lens spaces and projective spaces do not admit Anosov diffeomorphisms.
\item Ruelle and Sullivan~\cite{RS} showed that if $M$ admits a codimension $k$ transitive Anosov diffeomorphism 
$f$ with orientable invariant distributions then $H^k(M; \mathbb R)\neq0$. In fact, they show that there is a {non-zero} 
cohomology class $\alpha\in H^k(M; \mathbb R)$ and positive $\lambda \neq 1$ with the property that 
$f^*(\alpha)= \lambda \cdot \alpha$. Existence of such a class provides an obstruction to the existence of Anosov diffeomorphisms on the level of the cohomology ring.
\item Gogolev and Rodriguez Hertz~\cite{GRH} have recently shown that certain products of spheres cannot 
support Anosov diffeomorphisms.
\item Finally, the most relevant reference with respect to the current paper is~\cite{Y}, where Yano used bounded cohomology to show that negatively curved manifolds do not support Anosov diffeomorphisms, (cf. Section~\ref{sec_yano} for an alternate proof).
\end{itemize}

\section{Periodic points of Anosov diffeomorphisms} \label{Sec_Lef}

Here we collect some well known facts. We refer the reader to~\cite[Chapter 6]{vick} and~\cite{Sm} for further details on this material.

Recall that if $X$ is a closed manifold and  $f\colon X\to X$ is a self-map with finitely many fixed points, then
the Lefschetz formula calculates the sum of indices of the fixed points --- the Lefschetz number --- as follows
$$
\Lambda(f)\stackrel{\mathrm{def}}{=}\sum_{p\in Fix(f)}ind_f(p)=\sum_{k\ge 0}(-1)^k Tr(f_*|_{H_k(X; \R)}).
$$
Now assume that $X$ is a closed oriented manifold and $f$ is an Anosov diffeomorphism with oriented unstable subbundle $E^u$, 
and that $f$ preserves the orientation of the unstable subbundle. Then 
$$ind_{f^m} (x)=(-1)^{\dim E^u}
$$
for all $x\in Fix(f^m)$, $m\ge 1$. Hence the number of points fixed by $f^m$ can be calculated as follows:
\begin{equation}
\label{eq_lefschetz}
\left|Fix(f^m)\right|=\left|\Lambda(f^m)\right|
\end{equation}

% Also, naturality of Poincar\'e duality allows us to use cohomology instead of homology.
%\begin{equation}\label{lef_formula}
 %|Fix(f^l)|=\left|\sum_{k\ge 0} (-1)^kTr\big((f^*)^{-l}|_{H^k(X; \Q)}\big)\right|
%\end{equation}

On the other hand,  $|Fix(f^m)|$ can be calculated from the Markov coding. In particular, for a transitive 
Anosov diffeomorphism $f$ the following asymptotic formula holds
\begin{equation*}\label{form_asymp_transitive}
|Fix(f^m)|=e^{mh_{top}(f)}+o(e^{mh_{top}(f)}),
\end{equation*}
where $h_{top}(f)$ is the topological entropy of $f$. 
 
For general, not necessarily transitive, Anosov diffeomorphism $f$ the formula takes the form 
\begin{equation}\label{form_asymp_general}
|Fix(f^m)|=re^{mh_{top}(f)}+o(e^{mh_{top}(f)})
\end{equation}
where $r$ is the number of transitive basic sets with entropy equal to $h_{top}(f)$.

Finally, we will need a formula due to Manning~\cite{Mann1} for the Lefschetz number of an automorphism of a
nilmanifold in terms of its eigenvalues. Let $L\colon N\to N$ be an automorphism of a compact nilmanifold $N$. Then
\begin{equation}\label{eq_manning}
\Lambda(L^m)=\prod_{\lambda\in \text{Spec}(L)} (1-\lambda^m), \;\;\;m\ge 1,
\end{equation}
where the product is taken over all eigenvalues (counted with multiplicity) of the Lie algebra automorphism induced 
by $L$.

%Finally, we will need the following well-known lemma which is helpful for calculating the Lefschetz number
%\begin{lemma}\label{lemma_duality}
%Let $M$ be a closed oriented $n$-dimensional manifold and let $f\colon M\to M$ be an orientation-preserving diffeomorphism. Choose $k\in[0,n]$ and let the induced automorphism $f_{*k}\colon H_k(M; \Q)\to H_k(M; \Q)$ be represented by a matrix $A_k$ with respect to some basis. Then the induced automorphism $f_{*(n-k)}\colon H_{n-k}(M; \Q)\to H_{n-k}(M; \Q)$ is represented by the matrix $(A_k^T)^{-1}$ with respect to the dual basis.
%\end{lemma}

\section{Absence of Anosov diffeomorphisms on negatively curved manifolds}
\label{sec_yano}

The following Lemma is presumably well-known to experts, but does not seem to appear in the literature.\footnote{We thank the referee for observing that the argument based on the Lefschetz formula works for proving Lemma~\ref{lem:finite-out}, cf. \tt{arXiv:1511.00261v1}}

\begin{lemma}\label{lem:finite-out}
Let $M$ be a closed aspherical manifold with fundamental group $\Gamma:= \pi_1(M)$. If $\text{Out}(\Gamma)$ is 
torsion, then $M$
does not support any Anosov diffeomorphism. 
\end{lemma}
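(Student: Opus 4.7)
The plan is to argue by contradiction and play two standard facts about an Anosov diffeomorphism $f\colon M\to M$ against each other: the Lefschetz identity \eqref{eq_lefschetz}, which (under orientation hypotheses) gives $|\mathrm{Fix}(f^m)|=|\Lambda(f^m)|$, and the asymptotic \eqref{form_asymp_general}, which forces $|\mathrm{Fix}(f^m)|$ to grow exponentially because every Anosov diffeomorphism has $h_{top}(f)>0$. If I can arrange that the Lefschetz numbers of an arithmetic subsequence of iterates stay bounded, then these two facts collide.

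The hypothesis on $\mathrm{Out}(\Gamma)$ feeds into the Lefschetz side. Since $M$ is aspherical, the induced map $f_*\colon\Gamma\to\Gamma$ yields a well-defined class $[f_*]\in\mathrm{Out}(\Gamma)$, and our torsion assumption gives $[f_*]^k=1$ for some $k\geq 1$. Thus $f^k_*$ is an inner automorphism of $\Gamma$, and the classification of self-maps of aspherical manifolds up to homotopy by outer classes of $\pi_1$-homomorphisms gives $f^k\simeq\mathrm{id}_M$. In particular $(f^k)_*$ is the identity on $H_*(M;\R)$, so
\[
\Lambda(f^{km})=\chi(M) \quad\text{for every } m\geq 1.
\]

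Before invoking \eqref{eq_lefschetz}, I need the orientation hypotheses to hold. If $E^u$ is not orientable on $M$, I would lift to the orientation double cover $\tilde M\to M$: the orientation character of $E^u$ is preserved by $f_*$ because $f$ maps $E^u$ to itself, so $f$ lifts to an Anosov diffeomorphism $\tilde f$ on $\tilde M$ with orientable unstable bundle. The cover $\tilde M$ is still aspherical, and if $\gamma\in\Gamma$ conjugates $f_*^k$ to the identity then $\gamma$ normalises the index-two subgroup $\tilde\Gamma:=\pi_1(\tilde M)$, so $\gamma^2\in\tilde\Gamma$ witnesses that $[\tilde f_*]$ has finite order in $\mathrm{Out}(\tilde\Gamma)$. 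After replacing $\tilde f$ by a further iterate I can also assume the orientations of $\tilde M$ and of $E^u$ are preserved.

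With these reductions, \eqref{eq_lefschetz} applied to $\tilde f^{km}$ gives
\[
\bigl|\mathrm{Fix}(\tilde f^{km})\bigr|=\bigl|\Lambda(\tilde f^{km})\bigr|=|\chi(\tilde M)|,
\]
a bound uniform in $m$. On the other hand, $\tilde f$ is Anosov, so $h_{top}(\tilde f)>0$, and \eqref{form_asymp_general} forces $|\mathrm{Fix}(\tilde f^{km})|$ to grow like $e^{km\,h_{top}(\tilde f)}$ as $m\to\infty$, yielding the desired contradiction. The one place I expect to have to be careful is the orientation bookkeeping: making sure that the passage to the orientation cover and to a suitable iterate simultaneously preserves asphericity, the Anosov property, the orientation of $E^u$, and the torsion of the induced outer automorphism. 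Once those reductions are in hand, the contradiction follows immediately from combining \eqref{eq_lefschetz} and \eqref{form_asymp_general}.
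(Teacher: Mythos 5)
Your proposal is correct and follows essentially the same route as the paper: use that some power $f_\sharp^k$ is inner (hence $f^k\simeq\mathrm{id}_M$ since $M$ is aspherical), conclude that $\Lambda(f^{km})$ is bounded, and contradict the exponential growth of $|\mathrm{Fix}(f^{km})|$ coming from \eqref{eq_lefschetz} and \eqref{form_asymp_general}. The only difference is cosmetic — the paper phrases the triviality of the cohomology action via the functorial identification $H^*(M)\cong H^*(\Gamma)$ and the fact that inner automorphisms act trivially on group cohomology, while you pass through $f^k\simeq\mathrm{id}_M$ — and you spell out the orientation-double-cover bookkeeping (in particular that $[\tilde f_\sharp]$ remains torsion) a bit more explicitly than the paper does.
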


\begin{proof}
Let us assume $M$ supports an Anosov diffeomorphism. Lifting to a finite cover, we may assume that the 
invariant distributions are oriented. For $M$ aspherical, there is an identification $H^*(M; \mathbb R) \cong 
H^*(\Gamma; \mathbb R)$ between the cohomology of the manifold $M$ and the (group) cohomology of $\Gamma$. 
Moreover, this identification is functorial, so one has a commutative diagram
$$
\xymatrix{
H^*(M; \mathbb R) \ar[r]^{f^*} \ar[d]^{\simeq}& H^*(M; \mathbb R) \ar[d]^{\simeq} \\
H^*(\Gamma ; \mathbb R) \ar[r]^{(f_\sharp)^*} & H^*(\Gamma ; \mathbb R) 
}
$$
At the level of group cohomology, any inner automorphism induces the identity map on group cohomology. 
Since $\text{Out}(\Gamma)$ is torsion, a finite power of $f_\sharp$ is inner, and hence a finite power $f_\sharp^k$ is the identity on cohomology. 

Therefore the cohomology automorphisms $(f^{mk})^*$, $m\ge 1$ are the identity on $H^*(M)$. Hence, using 
Poincar\'e Duality, we conclude that the Lefschetz numbers $\Lambda(f^{mk})$, $m\ge 1$ are uniformly bounded. 
But this is in contradiction with the growth of periodic points, by~(\ref{eq_lefschetz}) and~(\ref{form_asymp_general}).
\end{proof}
\begin{remark} There exist aspherical manifolds $M$ which do not admit Anosov diffeomorphisms for the same reason, but have infinite order elements in $\text{Out}(\pi_1M)$. Indeed, the manifolds $M$ obtained from a certain (twisted) double construction as in~\cite{NP}, 
\cite[Sections 5.2, 5.4]{FLS}, have infinite order elements in $\text{Out}(\pi_1M)$. However, it is easy to see that (after passing to a finite power) all infinite order elements act trivially on cohomology. Hence, by the above argument, such $M$ also do not admit Anosov diffeomorphisms.
\end{remark}

We note that Lemma \ref{lem:finite-out} already gives a substantial restriction on the possible closed aspherical 
manifolds that can support an Anosov diffeomorphism. For instance, it follows from Mostow rigidity that the outer 
automorphism group of a lattice $\Gamma$ in a simply-connected, connected, non-compact, real semisimple Lie group $G$ 
has finite outer automorphism provided $G$ has no local $SO(2,1)$ factors. This yields the immediate:

\begin{cor}
Let $M$ be a non-positively curved locally symmetric manifold, whose universal cover $\tilde M$ does not split 
off an $\mathbb H^2$-factor or an $\R$-factor. Then $M$  cannot support any  Anosov diffeomorphisms.
\end{cor}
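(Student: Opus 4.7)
The plan is to verify the hypothesis of Lemma~\ref{lem:finite-out}, namely that $\text{Out}(\Gamma)$ is finite (hence torsion), where $\Gamma := \pi_1(M)$. Once this is established, Lemma~\ref{lem:finite-out} delivers the result at once. The key input has already been spelled out in the paragraph preceding the corollary: any lattice in a simply-connected, connected, non-compact, real semisimple Lie group with no local $SO(2,1)$ factors has finite outer automorphism group.

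First I would invoke the de Rham decomposition of the universal cover, writing $\tilde M \cong E^n \times X_1 \times \cdots \times X_k$ with $E^n$ Euclidean and each $X_i$ an irreducible symmetric space of non-compact type. The hypothesis that $\tilde M$ has no $\mathbb R$-factor forces $n = 0$, while the hypothesis of no $\mathbb H^2$-factor excludes any $X_i$ from being the hyperbolic plane. Consequently $G := \text{Isom}^0(\tilde M) \cong \prod_i \text{Isom}^0(X_i)$ is a connected, non-compact, real semisimple Lie group with no local $SO(2,1)$ factors, and $\Gamma$ realizes as a cocompact lattice in $G$ through its deck transformation action.

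Next I would pass from $G$ to its simply-connected cover $\tilde G \to G$ and lift $\Gamma$ to a cocompact lattice $\tilde\Gamma$ in $\tilde G$ (possibly after replacing $\Gamma$ by a finite-index subgroup, which does not affect the finiteness of $\text{Out}$). The rigidity statement cited above then applies and yields $|\text{Out}(\tilde\Gamma)| < \infty$, whence $|\text{Out}(\Gamma)| < \infty$ as well. Lemma~\ref{lem:finite-out} then concludes the proof.

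The main point requiring care is the reducible situation, in which $\Gamma$ is a lattice in a nontrivial product: here one must combine Mostow rigidity on the rank-one factors with Margulis superrigidity on the higher-rank factors, and include the finite symmetry group permuting isomorphic factors. This bookkeeping is standard, and the only reason I would flag it as a potential obstacle is to make sure no $\mathbb H^2$-factor sneaks in through a reducible decomposition --- but this is precisely what the hypothesis on $\tilde M$ forbids.
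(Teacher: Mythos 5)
Your approach is identical to the paper's: verify the hypotheses of Lemma~\ref{lem:finite-out} by showing $\text{Out}(\Gamma)$ is finite, using Mostow/Margulis rigidity and the hypothesis that $\tilde M$ has no $\mathbb H^2$- or $\mathbb R$-factor. The paper states this as an immediate consequence, and you are just filling in the de~Rham-decomposition bookkeeping that the paper leaves implicit.

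One step in your write-up is incorrect as stated: you claim that ``replacing $\Gamma$ by a finite-index subgroup \dots does not affect the finiteness of $\text{Out}$.'' That is false in general --- $\text{Out}$ of a group and $\text{Out}$ of a finite-index subgroup can differ wildly in cardinality (for instance $PSL(2,\mathbb Z)$ has finite outer automorphism group but contains a free group of rank two with infinite $\text{Out}$ as a finite-index subgroup). In particular, finiteness of $\text{Out}(\tilde\Gamma)$ for some finite-index $\tilde\Gamma\leq\Gamma$ does not, by itself, give finiteness of $\text{Out}(\Gamma)$, which is what you actually need. The gap is easy to close, and the intended statement needs no finite-index passage at all: Mostow rigidity and Margulis superrigidity apply to lattices in semisimple Lie groups directly, so one should simply apply them to $\Gamma$ (or to its image in $\text{Isom}(\tilde M)$, working with the adjoint group to kill any finite center rather than passing to the universal cover). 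With that adjustment the argument is complete and coincides with the paper's.
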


\begin{remark} %\marginpar{Add reference for Anosov flows on hyperbolic 3-manifolds.  -J}
 There are hyperbolic 3-manifolds $M$ which are known to support Anosov flows (see Goodman~\cite{Goo}). Thus, 
 the analogous result does not hold for Anosov flows or for partially hyperbolic diffeomorphisms. Note also, that by taking the product of the time-1 map of such a flow and an Anosov diffeomorphism of a nilmanifold $N$ we can obtain a partially hyperbolic diffeomorphism on the product $M\times N$ with one dimensional center distribution (cf. the Main Theorem).
\end{remark}

As another application of this result we recover the result of Yano~\cite{Y}.

%\marginpar{J: Probably true if $M$ is just aspherical with Gromov hyperbolic fundamental group. Bdry at infty is a Cech homology sphere of dimension (n-1), which probably still can't be disconnected by a pair of points.}
\begin{cor}\label{cor-yano}
Let $M$ be a closed negatively curved Riemannian manifold of dimension $n\geq 3$. Then $M$ does not support any  Anosov diffeomorphism.
\end{cor}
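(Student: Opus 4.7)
The plan is to derive this corollary as an immediate application of Lemma~\ref{lem:finite-out} to $M$ itself. One needs to verify two hypotheses: that $M$ is aspherical, and that $\mathrm{Out}(\pi_1 M)$ is torsion (in fact, I will aim for finite). Asphericity is immediate from the Cartan--Hadamard theorem, which identifies the universal cover of a closed negatively curved manifold with $\R^n$. Moreover, $\Gamma := \pi_1(M)$ acts properly and cocompactly on this negatively curved universal cover, which is $\delta$-hyperbolic, so $\Gamma$ is a torsion-free Gromov hyperbolic group. Its Gromov boundary $\partial\Gamma$ is naturally identified with the visual sphere at infinity of the universal cover, and is therefore homeomorphic to $S^{n-1}$.

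The crux is then to show that $\mathrm{Out}(\Gamma)$ is finite. This is where the assumption $n \geq 3$ enters essentially: the statement fails for $n = 2$, where surface mapping class groups provide infinite outer automorphism groups. I would argue by contradiction, combining two classical results of geometric group theory. First, by a theorem of Paulin, if $\mathrm{Out}(\Gamma)$ were infinite for the (one-ended, torsion-free) Gromov hyperbolic group $\Gamma$, then $\Gamma$ would admit a nontrivial splitting as an amalgamated product or HNN extension over a virtually cyclic subgroup. (One-endedness comes from $\Gamma$ being the fundamental group of a closed aspherical manifold of dimension $\geq 3$, hence a Poincar\'e duality group of dimension at least three.) Second, by Bowditch's work on the JSJ decomposition of one-ended hyperbolic groups, any such splitting must produce a local cut point in $\partial\Gamma$. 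But $\partial\Gamma \cong S^{n-1}$ is a sphere of dimension at least two, and thus has no local cut points. This contradiction forces $\mathrm{Out}(\Gamma)$ to be finite.

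With both hypotheses of Lemma~\ref{lem:finite-out} in place, the conclusion that $M$ supports no Anosov diffeomorphism is immediate. The main obstacle in the proof is the Paulin--Bowditch step establishing finiteness of $\mathrm{Out}(\Gamma)$; everything else is a direct consequence of the Cartan--Hadamard theorem and the identification of Gromov boundaries with spheres at infinity.
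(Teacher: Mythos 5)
Your proposal is correct and takes essentially the same approach as the paper: apply Lemma~\ref{lem:finite-out} after establishing finiteness of $\mathrm{Out}(\Gamma)$ via the Paulin/Bestvina--Feighn splitting theorem and Bowditch's local cut-point criterion, using that $\partial^\infty\Gamma \cong S^{n-1}$ with $n-1\geq 2$ has no local cut points. The only cosmetic difference is that you frame it as a direct proof of finiteness of $\mathrm{Out}(\Gamma)$, whereas the paper first invokes Lemma~\ref{lem:finite-out} to reduce to the case $\mathrm{Out}(\Gamma)$ infinite and then derives the contradiction.
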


\begin{proof}
Since $M$ is negatively curved, $\Gamma:= \pi_1(M)$ is a torsion-free Gromov hyperbolic group. Moreover, the 
boundary at infinity $\partial ^\infty \Gamma$ is homeomorphic to the boundary at infinity $\partial ^\infty \tilde M$ 
of the universal cover of $M$, which we know is a sphere $S^{n-1}$ of dimension $n-1 \geq 2$. If $M$ supports 
an Anosov diffeomorphism, then  Lemma~\ref{lem:finite-out} tells us $Out(\Gamma)$ must be infinite. Work of Paulin \cite{Pa} 
and Bestvina and Feighn \cite[Corollary 1.3]{BF} then implies that $\Gamma$ splits over a cyclic subgroup. By Bowditch 
\cite[Theorem 6.2]{Bow}, this forces $\partial ^\infty \Gamma \cong S^{n-1}$ to have a local cut-point (i.e., an open 
connected subset $U\subset S^{n-1}$ and a point $p\in U$ with the property that $U\setminus \{p\}$ is disconnected), 
a contradiction since the boundary at infinity is a sphere of dimension $\geq 2$.
\end{proof}

%\footnote{In fact, using the fact that the outer automorphism group of the fundamental group of a negatively curved manifold is finite, one can use Lefschetz formula to conclude that negatively curved manifolds do not carry Anosov diffeomorphisms} 

%\todo{Should I add a short subsection talking about the bounded cohomology obstruction?  --J\\
%I guess so,\\
%1) would it give an independent proof of Cor. 2.2?\\
%2) any example where bounded cohomology applies but finite Out doesn't?}

\section{Locally maximal hyperbolic sets}\label{sec_hyp_sets}
Let $f\colon M\to M$ be a diffeomorphism. Recall that an $f$-invariant closed set $K$ is called {\it hyperbolic} if the tangent 
bundle over $K$ admits a $df$-invariant splitting $T_KM=E^s\oplus E^u$, where vectors in $E^s$ ($E^u$) decay 
(grow) exponentially fast -- in the sense of equation~(\ref{def_anosov}). A hyperbolic set $K$ is called {\it locally 
maximal} if there exists an open neighborhood $V$ of $K$ such that
\begin{equation}
\label{eq_loc_maximal}
K=\bigcap_{m\in\Z}f^m(V).
\end{equation}
We note that $f$ is Anosov if and only if the entire manifold $M$ is a hyperbolic set. Thus, the notion of hyperbolicity is 
intended to reflect some ``Anosov-type'' behavior on the set $K$.

\begin{prop}
\label{prop_local_prod_structure}
A hyperbolic set $K$ is locally maximal if and only if it has a local product structure.
\end{prop}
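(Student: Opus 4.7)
The plan is to prove the two implications separately. The forward direction is essentially immediate from the invariance of the local stable and unstable manifolds, while the reverse direction requires a shadowing-type argument based on iterated bracketing.

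\textbf{Local maximality $\Rightarrow$ local product structure.} Suppose $K=\bigcap_{m\in\Z}f^m(V)$. By continuity of the hyperbolic splitting and the Stable Manifold Theorem, there exist $\epsilon_0,\delta_0>0$ such that for every pair $x,y\in K$ with $d(x,y)<\delta_0$ the transverse intersection $[x,y]:=W^s_{\epsilon_0}(x)\cap W^u_{\epsilon_0}(y)$ is a single point of $M$. Since $[x,y]\in W^s_{\epsilon_0}(x)$, its forward iterates shadow those of $x\in K$ and in particular stay within distance $\epsilon_0$ of $K$; symmetrically the backward iterates stay within $\epsilon_0$ of $K$. Choosing $\epsilon_0$ so small that the $\epsilon_0$-neighborhood of $K$ lies inside $V$ forces $f^n([x,y])\in V$ for all $n\in\Z$, and then local maximality gives $[x,y]\in K$.

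\textbf{Local product structure $\Rightarrow$ local maximality.} Fix $\epsilon,\delta$ witnessing the local product structure. Choose a neighborhood $V$ of $K$ small enough that (i) by persistence of cone fields, the maximal invariant set $\tilde K:=\bigcap_m f^m(V)$ is itself hyperbolic, and (ii) each point of $V$ lies within $\delta/4$ of $K$. It suffices to show $\tilde K\subseteq K$. Given $z\in\tilde K$, pick $x_n\in K$ with $d(f^n(z),x_n)<\delta/4$ for every $n\in\Z$; by uniform continuity of $f$, the sequence $\{x_n\}$ is an arbitrarily fine pseudo-orbit in $K$. The heart of the argument is the construction of a shadow: setting $y_{-N}^{(N)}=x_{-N}$ and inductively defining
\[
y_{k+1}^{(N)}:=[\,x_{k+1},\,f(y_k^{(N)})\,]\in K\qquad (-N\le k\le N-1),
\]
the local product structure keeps each bracketing well defined and produces points of $K$ whose orbits uniformly $\epsilon$-track $\{x_n\}$ on the window $[-N,N]$. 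Passing to a limit as $N\to\infty$ (using compactness of $K$) yields a genuine orbit $\{f^n(x)\}\subset K$ that uniformly $\epsilon$-shadows $\{f^n(z)\}$. Expansivity of the hyperbolic set $\tilde K$ then forces $x=z$, so $z\in K$.

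\textbf{Main obstacle.} The forward implication is a one-line consequence of the invariance of $W^s$ and $W^u$. The difficulty is entirely in the reverse direction: one must control the inductive bracketing so that the points $y_k^{(N)}$ remain within distance $\delta$ of their targets $x_{k+1}$ at every step, so that the next application of $[\cdot,\cdot]$ is defined. This is the technical core of the Anosov--Bowen shadowing lemma for hyperbolic sets admitting canonical coordinates, and once shadowing plus expansivity are in hand the equality $\tilde K=K$ follows as above.
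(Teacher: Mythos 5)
The paper does not supply its own proof of this proposition; it simply refers the reader to Brin and Stuck (Chapter 5 of their book) for both the definition of local product structure and the argument. Your sketch is the standard proof found there---the forward implication via invariance of local stable and unstable manifolds, and the reverse implication via shadowing plus expansivity---with the iterated-bracketing estimate you explicitly flag being precisely the technical content of the Anosov--Bowen shadowing lemma.
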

We refer to~\cite[Chapter 5]{BS} for the definition of local product structure and the proof of this proposition.

\begin{prop}[see \eg Corollary 6.4.19 in~\cite{KH} ]
\label{prop_periodic_dense}
Let $K$ be a locally maximal hyperbolic set for $f$ and let $\Omega\subset K$ be the set of non-wandering points 
for the restriction $f|_K$. Then periodic points of $f|_K$ are dense in $\Omega$.
\end{prop}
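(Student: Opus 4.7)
The plan is to deduce density of periodic points from the Anosov closing lemma, whose availability on $K$ is guaranteed by the local product structure supplied by Proposition~\ref{prop_local_prod_structure}.

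Fix $x\in\Omega$ and $\e>0$; we aim to produce a periodic point of $f|_K$ within distance $\e$ of $x$. Since $x$ is non-wandering for $f|_K$, for every neighborhood $U$ of $x$ in $K$ there exist $y\in U$ and $m\ge 1$ with $f^m(y)\in U$. By shrinking $U$, we may arrange $d(y,f^m(y))$ to be as small as we wish, while keeping $y$ within $\e/2$ of $x$.

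I would then apply the Anosov closing lemma to $y$. Concretely, by Proposition~\ref{prop_local_prod_structure}, for $a,b\in K$ sufficiently close there is a bracket $[a,b]\in K$ lying in the intersection of the local stable leaf through $a$ with the local unstable leaf through $b$. Consider the map $\Phi(w):=[f^m(w),w]$ on a small closed neighborhood of $y$ in $K$. The hyperbolic estimates~(\ref{def_anosov}), together with continuity of the local stable and unstable laminations on $K$, show that (for $d(y,f^m(y))$ small) $\Phi$ maps this neighborhood into itself and is a contraction in a suitable adapted metric, since the stable direction is contracted by $f^m$ while pairing with $w$ contracts the unstable direction. Its unique fixed point $z$ then satisfies $z\in W^s_{\mathrm{loc}}(f^m(z))$ and $f^m(z)\in W^u_{\mathrm{loc}}(z)$, which together with the uniqueness of such bracket intersections forces $f^m(z)=z$. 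Because the construction is carried out inside $K$ and $K$ is locally maximal, the periodic point $z$ lies in $K$ and is close to $y$, hence within $\e$ of $x$.

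The main technical point is verifying the contraction property of $\Phi$ with uniform constants in $y\in K$; this is the heart of the classical Anosov closing lemma, and relies on the exponential estimates~(\ref{def_anosov}) restricted to the stable and unstable leaves through points of $K$. An equivalent, slightly cleaner packaging is via the shadowing lemma: the periodic $\delta$-pseudo-orbit obtained by concatenating the finite segment $(y,f(y),\dots,f^{m-1}(y))$ with itself is $\e$-shadowed by a genuine orbit of $f$, which by uniqueness of shadowing must be periodic of period $m$ and, by~(\ref{eq_loc_maximal}), must lie entirely in $K$. Either route produces the desired periodic point of $f|_K$ within $\e$ of $x$, proving density.
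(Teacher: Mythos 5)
The paper does not prove this proposition; it is cited from Katok--Hasselblatt \cite{KH}, and your proof simply reconstructs the standard argument that appears there. The overall strategy is correct: take $y$ near $x\in\Omega$ with $d(y,f^m(y))$ small, apply the Anosov closing lemma (or shadowing) on the hyperbolic set $K$, and use local maximality to conclude the resulting periodic orbit lies in $K$ rather than merely in a small neighborhood of it. The shadowing-lemma ``packaging'' you give at the end is clean and complete, and correctly identifies local maximality as exactly what forces the shadow orbit into $K$ via~(\ref{eq_loc_maximal}).

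The explicit contraction-map formulation, however, has a gap as written. If $z$ is a fixed point of $\Phi(w):=[f^m(w),w]$, then $z=[f^m(z),z]$, which says $z\in W^s_{\mathrm{loc}}(f^m(z))\cap W^u_{\mathrm{loc}}(z)$; the second membership is vacuous, so one only learns $z\in W^s_{\mathrm{loc}}(f^m(z))$. This does not by itself yield $f^m(z)=z$ --- you would still need, say, $f^m(z)\in W^u_{\mathrm{loc}}(z)$ before ``uniqueness of bracket intersections'' can close the orbit, and that is not supplied by the fixed-point equation for $\Phi$. The actual contraction-mapping proof of the closing lemma (e.g.\ Theorem~6.4.15 in \cite{KH}) is set up differently, iterating in a space of pseudo-orbits or on graphs over stable/unstable discs. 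Since your shadowing route is correct and self-contained, the fix is just to drop or rework the bracket-map paragraph rather than treat it as an equivalent proof.
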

We will use the above proposition for the proof of the following result.
\begin{prop}
\label{prop_periodic}
Let $K$ be an uncountable locally maximal hyperbolic set for $f$. Then the restriction $f|_K$ has infinitely many 
periodic points.
\end{prop}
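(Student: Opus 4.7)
The approach will be by contradiction: assume $f|_K$ has only finitely many periodic points and deduce that $K$ must be countable. As a first observation, Proposition~\ref{prop_periodic_dense} forces the non-wandering set $\Omega(f|_K)$ to coincide with the (finite) set of periodic points, hence to be the finite union of periodic orbits $\mathcal{O}_1\cup\cdots\cup\mathcal{O}_\ell$, with some uniform upper bound $T_0$ on their periods.

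The heart of the argument is to show that for every $x\in K$ the $\omega$-limit set $\omega(x)$ is a \emph{single} orbit $\mathcal{O}_j$ (and likewise $\alpha(x)$ is a single orbit). Suppose otherwise that $\omega(x)$ contains two distinct orbits $\mathcal{O}_a,\mathcal{O}_b$. Then one can select an increasing sequence $n_k\to\infty$ with $f^{n_k}(x)\to p\in\mathcal{O}_a$ and intermediate times $m_k\in(n_k,n_{k+1})$ with $f^{m_k}(x)\to q\in\mathcal{O}_b$. If the gaps $n_{k+1}-n_k$ stayed bounded, so would $m_k-n_k$; passing to a subsequence on which both are constants $T$ and $s$ respectively would give $f^s(p)=q$, contradicting $f^s(p)\in\mathcal{O}_a\neq\mathcal{O}_b$. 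Hence along a subsequence $n_{k+1}-n_k\to\infty$. The Anosov closing lemma -- a standard consequence of the local product structure supplied by Proposition~\ref{prop_local_prod_structure} -- then produces for each large $k$ a genuine periodic orbit in $K$ of period $n_{k+1}-n_k$. These periods are unbounded, contradicting $T_0$.

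Granted this dichotomy, every $x\in K$ lies in some $W^u(\mathcal{O}_i)\cap W^s(\mathcal{O}_j)$, so $K\subseteq\bigcup_{i,j}W^u(\mathcal{O}_i)\cap W^s(\mathcal{O}_j)$, a finite union. For each pair, $W^u(\mathcal{O}_i)$ and $W^s(\mathcal{O}_j)$ are injectively-immersed, second-countable submanifolds of complementary dimensions $\dim E^u$ and $\dim E^s$, meeting transversely at every intersection point since $T_xM=E^u_x\oplus E^s_x$. Therefore $W^u(\mathcal{O}_i)\cap W^s(\mathcal{O}_j)$ is a discrete subset of the second-countable manifold $W^u(\mathcal{O}_i)$, and so is at most countable. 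A finite union of countable sets is countable, contradicting the uncountability of $K$. The main technical hurdle is the closing-lemma step, where one must carefully verify that oscillation between two distinct limit orbits really does yield periodic orbits of unboundedly large period.
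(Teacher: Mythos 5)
Your proof follows the same overall strategy as the paper's: assume finitely many periodic points, show every point of $K$ is a heteroclinic or homoclinic point between two periodic orbits, then use transversality plus second-countability of the stable/unstable leaves to conclude $K$ is countable, contradicting the hypothesis. The decomposition $K\subseteq\bigcup_{i,j}W^u(\mathcal{O}_i)\cap W^s(\mathcal{O}_j)$ is the same as the paper's $K=\bigsqcup_p W^{s,K}(p)=\bigsqcup_p W^{u,K}(p)$.

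Where the proposals diverge is in how the decomposition is justified. The paper's argument is very terse: it says that if $x$ is not in any stable set, then an $\omega$-limit point $q$ of $x$ is ``a non-wandering point for $f|_K$ which does not belong to $\Omega$'' --- but by definition $\Omega$ \emph{is} the set of non-wandering points of $f|_K$, so $q\in\Omega$, and the intended contradiction is not spelled out. (The implicit content is that $\omega(x)$, being contained in the finite set $\Omega$, must reduce to a single fixed point, and this requires ruling out oscillation between several fixed points.) Your closing-lemma argument explicitly fills this gap: you show that if $\omega(x)$ meets two distinct orbits, the orbit of $x$ makes increasingly long excursions, and by shadowing (which is available since $K$ is locally maximal, hence has local product structure) these excursions are closed by genuine periodic orbits inside $K$. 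This is the right mechanism, and is more self-contained than the paper's version.

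One small imprecision: concluding a contradiction from ``these periods are unbounded, contradicting $T_0$'' is not quite right as stated, since a periodic point of prime period $T\le T_0$ also has period $mT$ for every $m$. What the closing lemma actually gives you is a periodic orbit of period $n_{k+1}-n_k$ whose orbit passes within $\varepsilon_k$ of \emph{both} $p$ and $q$ (with $\varepsilon_k\to 0$); for $k$ large this orbit cannot equal any of the finitely many orbits $\mathcal{O}_1,\ldots,\mathcal{O}_\ell$, because those are fixed finite sets and $p,q$ lie on distinct ones. You flag this as the main technical hurdle at the end, which is the right instinct; the argument does close, but via ``new periodic orbit'' rather than ``unbounded period.'' Similarly, in the final countability step, the transversality $T_xM=E^s_x\oplus E^u_x$ is guaranteed only at $x\in K$; it is cleanest to say that points of $W^u(\mathcal{O}_i)\cap W^s(\mathcal{O}_j)\cap K$ are isolated in the ambient intersection, hence form a countable subset of the second-countable leaf. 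Neither issue affects the correctness of the approach.
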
 
\begin{proof}
Let $\Omega$ be the set of non-wandering points of $f|_K$. (Note that $\Omega$ may not be the same as the set of 
non-wandering points of $f$ in $K$.) By  Proposition~\ref{prop_periodic_dense} periodic points of $f|_K$ are dense in 
$\Omega$. Hence, if $\Omega$ is infinite, then Proposition~\ref{prop_periodic} follows. 
 
Now assume that $\Omega$ consists of finitely many periodic orbits. By passing to a finite iterate of $f$ we  can also 
assume that $f(p)=p$ for all $p\in\Omega$. Denote by $W^{s,K}(p)$ and $W^{u,K}(p)$ the stable and unstable sets of 
$p\in\Omega$, respectively; that is,
$$
W^{s, K}(p)=W^s(p)\cap K,\;\; W^{u, K}(p)=W^u(p)\cap K,
$$
where $W^s(p)$ and $W^u(p)$ are the stable and unstable manifolds of $p$, respectively. Then $K$ decomposes 
as a disjoint union
 \begin{equation*}
 K=\bigsqcup_{p\in\Omega}W^{s,K}(p).
 \end{equation*}
Indeed, if a point $x\in K$ does not belong to any stable set, then neither does its forward orbit $\{f^n(x);n\ge 1\}$. Let $q$ be an $\omega$-limit point for $\{f^n(x);n\ge 1\}$. But then $q$ is a non-wandering point for $f|_K$, which does not belong to $\Omega$, yielding a contradiction. Similarly,
 \begin{equation*}
 K=\bigsqcup_{p\in\Omega}W^{u,K}(p).
 \end{equation*}
Therefore, each wandering point of $K$ is a heteroclinic (or homoclinic) point. However, the stable and unstable manifolds 
of points in $\Omega$ intersect in at most countably many points. Hence $K$ is countable, which yields a contradiction.
\end{proof}

%=============================================================================================

\section{Proof of the Main Theorem}\label{sec:proof-of-theorem}
Our proof proceeds by assuming that there exists an Anosov diffeomorphism $f\colon M\times N\to M\times N$.
We first will go through a series of reductions (R1)-(R4).

\subsection{A reduction}
Here we show that, by passing to finite iterates of $f$ and finite covers of $M\times N$ we can additionally 
assume the following.
\begin{enumerate}
 \item[R1.] {\it $N$ is a nilmanifold;}
 \item[R2.] {\it the stable and unstable subbundles $E^u$ and $E^s$ are oriented and $f$ preserves these orientations;}
\end{enumerate}
Let $N'$ be a closed nilmanifold cover of $f$. Then some iterate of $f$ lifts to $M\times N'$ because of the following 
more general assertion.
\begin{lemma}
 Let $X$ be a closed manifold, let $\tilde X\to X$ be a finite cover of $X$ and let $f\colon X\to X$ be a diffeomorphism. 
 Then there exists $n\ge 0$ such that $f^n\colon X\to X$ lifts to a diffeomorphism $\widetilde{f^n}\colon\tilde X\to\tilde X$; 
 \ie the diagram 
$$
\xymatrix{
\tilde X\ar[d]\ar^{\widetilde{f^n}}[r] & \tilde X\ar[d] \\
X\ar^{f^n}[r]  & X
}
$$
commutes.
\end{lemma}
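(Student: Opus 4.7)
The plan is to translate the question into an algebraic condition on the fundamental group $\Gamma:=\pi_1(X)$, and then exploit finiteness. Let $p\colon \tilde X\to X$ denote the given finite cover, and set $H:=p_\ast\pi_1(\tilde X)\leq \Gamma$, a subgroup of finite index $d:=[\Gamma:H]$. It is convenient to work in a basepoint-free manner, remembering only the conjugacy class $[H]$ of $H$ in $\Gamma$; the classical lifting criterion then says that a continuous map $g\colon X\to X$ admits a continuous lift $\tilde g\colon \tilde X\to \tilde X$ if and only if $g_\ast[H]=[H]$, where $g_\ast$ denotes the induced action on conjugacy classes of subgroups of $\Gamma$ (well-defined because inner automorphisms preserve conjugacy).

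Next I would invoke the standard fact that a finitely generated group has only finitely many subgroups of any given finite index $d$, since such subgroups correspond to transitive actions of $\Gamma$ on a $d$-element set, hence to a subset of the finite set $\mathrm{Hom}(\Gamma,S_d)$. Because $X$ is closed, $\Gamma$ is finitely generated, so there are only finitely many conjugacy classes of index-$d$ subgroups. The $f_\ast$-orbit $\{[H],f_\ast[H],f_\ast^2[H],\dots\}$ therefore lies in a finite set, and pigeonhole produces some $n\geq 1$ with $f_\ast^n[H]=[H]$.

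Applying the lifting criterion to $g:=f^n$ then produces a continuous lift $\widetilde{f^n}\colon \tilde X\to \tilde X$ making the desired square commute. Smoothness is automatic since $p$ is a local diffeomorphism and $f^n$ is smooth. To see that $\widetilde{f^n}$ is in fact a diffeomorphism, I would perform a degree count on the identity $p\circ \widetilde{f^n}=f^n\circ p$: the right-hand side is a degree-$d$ covering map, and since $p$ itself has degree $d$, the map $\widetilde{f^n}$ must have degree one. Being a proper local diffeomorphism of degree one between compact connected manifolds, it is a covering map of degree one, hence a diffeomorphism.

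The only real subtlety is the careful basepoint-free formulation of the lifting criterion, which is what legitimizes pigeonholing on conjugacy classes of subgroups rather than on pointed maps; beyond that bookkeeping, I do not expect any genuine obstacle, as the finiteness of index-$d$ subgroups in a finitely generated group is entirely standard.
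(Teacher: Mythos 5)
Your proof is correct and follows essentially the same route as the paper: reduce via the lifting criterion to showing that some iterate of $f_\#$ preserves the conjugacy class of the covering subgroup, then invoke M.~Hall's finiteness of index-$d$ subgroups in a finitely generated group together with the pigeonhole principle (and, implicitly, the fact that $f_\#$ is an automorphism so that the action on conjugacy classes can be cancelled). You go a bit further than the paper by actually verifying that the lift is a diffeomorphism; this is a reasonable addition, though if orientability is an issue one should count sheets or use $\mathbb{Z}/2$-degree rather than signed degree, or simply lift $f^{-n}$ as well and compose.
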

\begin{proof}
By the lifting criterion it suffices to show that some iterate of $f$ preserves the conjugacy class of the covering subgroup. 
The covering subgroup is a finite index subgroup of the fundamental group of $X$. Because the fundamental group of 
$X$ is finitely generated, there are only finitely many subgroups of given finite index (by a theorem of M. Hall). 
Hence, by the pigeonhole principle, some iterate of $f$ fixes the conjugacy class of the covering subgroup.
\end{proof}

Now, if $E^u$ is not orientable we can pass to the orienting double covering and to $f^2$, which ensures 
that the unstable distribution
are orientable and $f$ preserves the orientation. If $E^s$ is still non-orientable then the same procedure can 
be applied once again. 

In fact, we might need to pass to a double cover (and to a finite iterate) yet once more in order to be able to keep the 
product structure of the total space. Indeed, let $\Gamma=\pi_1M$ and $G=\pi_1N$. Let $H\subset \Gamma\times G$ be 
the orienting double cover subgroup. Then $H$ has index two and we let 
$H_\Gamma=H\cap (\Gamma\times \{id_G\})$ and $H_G=H\cap(\{id_\Gamma\}\times G)$. We identify $H_\Gamma$ 
and $H_G$ with subgroups of $\Gamma$ and $G$, respectively. It is easy to see that $H_\Gamma$ and $H_G$ are 
either index one or index two subgroups. If one of these subgroups has index one, then $H=H_\Gamma\times H_G$ 
and the double cover has a product structure. Otherwise $H_\Gamma\times H_G$ is an index two subgroup of $H$. 
In this case we pass to the 4-fold cover that corresponds to $H_\Gamma\times H_G$, which is clearly a product. 
Because $H_\Gamma\times H_G\subset H$ this 4-fold cover is also orienting.

\subsection{Further reduction: the induced automorphism}

Again, by passing to a finite iterate if necessary, we can assume that $f$ has a fixed point $p$. Consider the 
induced automorphism
$$
f_\#\colon \pi_1(M\times N, p)\to\pi_1(M\times N, p)
$$
Let $\Gamma=\pi_1M$ and $G=\pi_1N$. We identify $\pi_1(M\times N, p)$ and $\Gamma\times G$ in the obvious way.
\begin{lemma}
\label{lemma_automorphism}
The induced automorphism $f_\#$ has the following form
$$
f_\#(\gamma, g)=(\alpha(\gamma), \rho(\gamma)L(g)),
$$
where $\alpha\colon\Gamma\to\Gamma$ and $L\colon G\to G$ are automorphisms and $\rho\colon \Gamma\to\cZ(G)$ 
is a homomorphism into the center of $G$.
\end{lemma}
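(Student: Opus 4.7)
My approach is to analyze $f_\#$ by restricting it to the two factors of the direct product. Setting $f_\#(\gamma,1) = (\alpha(\gamma),\beta(\gamma))$ and $f_\#(1,g) = (C(g),L(g))$ defines homomorphisms $\alpha:\Gamma\to\Gamma$, $\beta:\Gamma\to G$, $C:G\to\Gamma$, $L:G\to G$ satisfying $f_\#(\gamma,g) = (\alpha(\gamma)C(g),\beta(\gamma)L(g))$. Because $(\gamma,1)$ and $(1,g)$ commute in $\Gamma\times G$, their $f_\#$-images must also commute, which forces $[\alpha(\Gamma),C(G)] = 1$ in $\Gamma$ and $[\beta(\Gamma),L(G)] = 1$ in $G$. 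Surjectivity of $f_\#$ further yields the factorization $\Gamma = \alpha(\Gamma)\cdot C(G)$, since any $\gamma' \in \Gamma$ can be written as $\alpha(\gamma) C(g)$ where $(\gamma,g)$ is any preimage of $(\gamma',1)$.

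The crucial step---and the main obstacle---is to show that $C$ is trivial. Observe that $C(G)$ is nilpotent (as a quotient of the nilpotent group $G$) and is normal in $\Gamma$ (as the projection under $\pi_\Gamma$ of the normal subgroup $f_\#(G)\triangleleft\Gamma\times G$). I would first use hypothesis (iii) to deduce $\cZ(\Gamma) = 1$: for any central element $\gamma\in\Gamma$ and any maximal nilpotent subgroup $M\subseteq\Gamma$, the subgroup $\langle M,\gamma\rangle = M\cdot\langle\gamma\rangle$ has the same lower central series as $M$ (since all commutators involving $\gamma$ vanish), so it is nilpotent of the same class as $M$; maximality then forces $\gamma\in M$, and intersecting over all $M$ gives $\gamma = 1$. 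Now for any $x\in \cZ(C(G))$, the element $x$ commutes with $C(G)$ by definition and with $\alpha(\Gamma)$ by the commutation relation, hence with all of $\Gamma = \alpha(\Gamma)C(G)$. Thus $\cZ(C(G))\subseteq \cZ(\Gamma) = 1$, and since every non-trivial nilpotent group has non-trivial center, this forces $C(G) = 1$.

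With $C = 1$ we have $f_\#(G)\subseteq G$; the same reasoning applied to $f_\#^{-1}$ gives the reverse inclusion, so $f_\#(G) = G$ and $L$ is an automorphism of $G$. Consequently $f_\#$ descends to an automorphism of the quotient $(\Gamma\times G)/G \cong \Gamma$, which is precisely $\alpha$, so $\alpha$ is also an automorphism. Finally, the relation $[\beta(\Gamma),L(G)] = 1$ combined with $L(G) = G$ gives $\beta(\Gamma)\subseteq \cZ(G)$, so setting $\rho := \beta$ yields the desired form $f_\#(\gamma,g) = (\alpha(\gamma),\rho(\gamma)L(g))$.
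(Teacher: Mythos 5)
Your proof is correct, and it takes a genuinely different route to the key fact that the cross term vanishes. The paper proves that $\{id_\Gamma\}\times G$ is characteristic in $\Gamma\times G$ by identifying it as the intersection of all maximal nilpotent subgroups of the product (using that every maximal nilpotent subgroup of $\Gamma\times G$ has the form $A\times G$ with $A$ maximal nilpotent in $\Gamma$), whereas you decompose $f_\#$ into all four component homomorphisms $\alpha,\beta,C,L$ and kill the cross map $C$ directly. Your mechanism is the derived fact $\cZ(\Gamma)=1$ together with the observations that $C(G)$ is nilpotent, commutes with $\alpha(\Gamma)$, and satisfies $\Gamma=\alpha(\Gamma)\cdot C(G)$ by surjectivity --- so $\cZ(C(G))\subseteq\cZ(\Gamma)=1$, forcing $C(G)=1$ since a nontrivial nilpotent group has nontrivial center. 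This is a clean and elementary alternative to the paper's characterization of maximal nilpotent subgroups in a product. A further difference: you deduce that $\alpha$ is an automorphism from the induced map on the quotient $(\Gamma\times G)/(\{1\}\times G)\cong\Gamma$ once $\{1\}\times G$ is known to be $f_\#$-invariant in both directions, and so you do not invoke the Hopfian hypothesis (i) at all; the paper instead observes $\alpha$ is surjective and appeals to Hopficity (though this is in fact also avoidable by the same quotient argument, since the subgroup is characteristic). Both proofs are valid; yours arguably uses less machinery at this particular step.
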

\begin{proof}
{
Recall that, by hypothesis, the group $\Gamma$ has the property that the intersection of all its maximal nilpotent 
subgroups is trivial. This implies that, for the group $\Gamma\times G$, the intersection of all the maximal nilpotent
subgroups is precisely the subgroup $\{id_\Gamma\}\times G$. It follows that the subgroup $\{id_\Gamma\}\times G$ 
is a characteristic subgroup in the product, hence is invariant under $f_{\#}$. We denote by 
$L = f_\#|_{\{id_\Gamma\}\times G} \in \text{Aut}(G)$ the induced automorphism of $G$.

Next we define $\alpha$ and $\rho$ via the following formula: for $\gamma \in \Gamma$, we set 
$$f_\# \left((\gamma, id_G)\right) = \left(\alpha(\gamma), \rho(\gamma) \right),$$
where $\alpha: \Gamma \rightarrow \Gamma$, $\rho:\Gamma \rightarrow G$ are homomorphisms.
Let us verify the expression for $f_\#$ given in the Lemma.
If $(\gamma, g)\in \Gamma \times G$ is arbitrary, then we have:
\begin{align*}
f_\#((\gamma, g)) &= f_\# \big((\gamma, id_G) (id_\Gamma, g) \big) 
=  f_\# \big((\gamma, id_G)\big) f_\# \big((id_\Gamma, g) \big) \\
&=  \left(\alpha(\gamma), \rho(\gamma) \right)(id_\Gamma, L(g)) = (\alpha(\gamma), \rho(\gamma)L(g)).
\end{align*}
Now $\alpha$ is clearly surjective, and $\Gamma$ is Hopfian, so $\alpha \in \text{Aut}(\Gamma)$. Finally, since
$f_\#$ is a homomorphism, the fact that $(\gamma, id_G)$ and $(id_\Gamma, g)$ always commute tells us that
$\rho(\gamma) L(g) = L(g) \rho(\gamma)$. This forces the homomorphism $\rho$ to have image in the center
$\cZ(G)$, and concludes the proof of the Lemma.
}
\end{proof}

Because $\text{Out}(\Gamma)$ is finite, by passing to a further iterate of $f$, we can (and do) assume the following.
\begin{enumerate}
\item[R3.]
{\it $\alpha\colon\Gamma\to\Gamma$ is an inner automorphism.}
\end{enumerate}

\subsection{The model}
Recall that, by work of Mal$'$cev~\cite{Mal}, a closed nilmanifold $N$ can be identified with the quotient space 
$\tilde N/G$, where $\tilde N$ is a simply connected nilpotent Lie group and $G\subset \tilde N$ is a cocompact 
lattice (which we identify with $\pi_1N$). Also $\cZ(G)=G\cap\cZ(\tilde N)$.

Now let $L\colon G\to G$ be an automorphism, then, again by work of Mal$'$cev, $L$ uniquely extends to an 
automorphism of $\tilde N$, which we continue to denote by $L$.

Denote by $\tilde M$ the universal cover of $M$. The fundamental group $\Gamma=\pi_1M$ acts on $\tilde M$ 
cocompactly by deck transformations, and we identify $\Gamma$ with an orbit of a base-point in $\tilde M$.

\begin{lemma}
 \label{lemma_rho_extend}
 Let $\rho\colon\Gamma\to\cZ(G)$ be a homomorphism. Then it extends to a smooth equivariant map 
 $\rho\colon\tilde M\to\cZ(\tilde N)$,
\ie
 $$
 \forall \gamma\in\Gamma,\;\;\forall x\in \tilde M\;\; \rho(\gamma x)=\rho(\gamma)\rho(x).
 $$
\end{lemma}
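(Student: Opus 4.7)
The plan is to construct $\rho$ globally via a partition-of-unity argument on $M$, exploiting the fact that the target $\cZ(\tilde N)$ is, as a Lie group, just a Euclidean space. Indeed, since $\tilde N$ is a simply connected nilpotent Lie group, the exponential map $\exp\colon\tilde{\mathfrak n}\to\tilde N$ is a diffeomorphism; restricting to the center, it yields a Lie group isomorphism $\exp\colon\cZ(\tilde{\mathfrak n})\xrightarrow{\cong}\cZ(\tilde N)$, whose inverse we denote $\log$. Since $\cZ(\tilde{\mathfrak n})$ is an abelian Lie algebra (\ie a finite-dimensional real vector space), we can form convex combinations in $\cZ(\tilde N)$ through logarithmic coordinates.

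First I would choose a locally finite open cover $\{V_j\}$ of $M$ by evenly covered open sets, together with distinguished lifts $\tilde V_j\subset\tilde M$, so that $\pi^{-1}(V_j) = \bigsqcup_{\gamma\in\Gamma}\gamma\tilde V_j$; whenever $\pi(x_0)\in V_j$, pick the lift $\tilde V_j$ to contain the basepoint $x_0$. For each $j$, define a local map $\rho_j\colon \pi^{-1}(V_j)\to\cZ(\tilde N)$ by declaring $\rho_j\equiv\rho(\gamma)$ on the sheet $\gamma\tilde V_j$. These maps are smooth (locally constant on each connected component) and tautologically satisfy $\rho_j(\gamma_0 y) = \rho(\gamma_0)\rho_j(y)$.

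Now let $\{\phi_j\}$ be a smooth partition of unity on $M$ subordinate to $\{V_j\}$, and set $\tilde\phi_j := \phi_j\circ\pi$, a $\Gamma$-invariant smooth function supported in $\pi^{-1}(V_j)$. Define the global map
$$
\rho(x):=\exp\Bigl(\,\sum_j \tilde\phi_j(x)\,\log\rho_j(x)\,\Bigr),
$$
with the convention that each summand is taken to be $0$ outside $\pi^{-1}(V_j)$ (where its coefficient $\tilde\phi_j$ vanishes anyway). This is a smooth map $\tilde M\to\cZ(\tilde N)$. Since $\log$ is a group isomorphism onto the abelian group $\cZ(\tilde{\mathfrak n})$ and $\sum_j\tilde\phi_j\equiv 1$, equivariance reduces to the identity
$$
\sum_j\tilde\phi_j(x)\log\rho_j(\gamma_0 x) \;=\; \log\rho(\gamma_0) + \sum_j\tilde\phi_j(x)\log\rho_j(x),
$$
which is immediate from $\rho_j(\gamma_0 y)=\rho(\gamma_0)\rho_j(y)$. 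The arrangement of the lifts around the basepoint ensures $\rho_j(x_0)=e$ for all relevant $j$, so $\rho(x_0)=e$ and the construction genuinely extends the original homomorphism along $\Gamma\cdot x_0$. I do not anticipate any real obstacle here: it is a routine partition-of-unity construction, and the only crucial ingredient is the vector-space structure on $\cZ(\tilde N)$ via $\log$, which is special to the simply-connected nilpotent setting.
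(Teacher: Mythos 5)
Your proof is correct, and it takes a genuinely different route from the one in the paper. The paper chooses a $\Gamma$-equivariant triangulation of $\tilde M$ with $\Gamma\cdot x_0$ in the $0$-skeleton, extends $\rho$ over the skeleta inductively (using connectedness of $\cZ(\tilde N)$ for the $1$-skeleton and contractibility for higher skeleta), and then smooths the resulting continuous equivariant map rel the orbit $\Gamma\cdot x_0$. Your argument replaces this obstruction-theoretic induction with a partition-of-unity averaging in $\log$-coordinates, exploiting that $\cZ(\tilde N)\cong\cZ(\tilde{\mathfrak n})$ is literally a finite-dimensional real vector space. The trade-off: the paper's argument only uses that the target is a contractible manifold (and so would generalize verbatim to any contractible target with a $\Gamma$-action by $\rho$-translations), while yours leans on the affine/vector structure — but that structure is genuinely present here, and the payoff is that you get smoothness for free rather than as a final approximation step. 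One small presentational remark: since $M$ is closed, you can (and should) take the cover $\{V_j\}$ to be finite, which sidesteps any worry about local finiteness of $\{\pi^{-1}(V_j)\}$ upstairs; and for the sum to define a smooth global function one should note that $\mathrm{supp}(\tilde\phi_j)=\pi^{-1}(\mathrm{supp}\,\phi_j)$ is a closed subset of $\pi^{-1}(V_j)$, so each term $\tilde\phi_j\log\rho_j$ extends by zero to a smooth function on all of $\tilde M$. With those minor points made explicit, the argument is complete.
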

\begin{proof}
 Triangulate $\tilde M$ in an equivariant way so that $\Gamma\subset \tilde M$ belongs to the 0-skeleton. 
 Extend $\rho$ to the 0-skeleton equivariantly in an arbitrary way. Recall that the center $\cZ(\tilde N)$ is a 
 Euclidean space. Because $\cZ(\tilde N)$ is connected $\rho$ can be equivariantly extended to 1-skeleton. 
 And because $\cZ(\tilde N)$ is aspherical we can extend $\rho$ equivariantly to all skeleta by induction on 
 dimension. To finish the proof we approximate the resulting map by a smooth equivariant map. It is easy to 
 see (by using charts) that this can be done without changing the values on $\Gamma\subset \tilde M$.
\end{proof}

Let $L\colon G\to G$ and $\rho \colon \Gamma\to\cZ(G)$ be given by Lemma~\ref{lemma_automorphism}. 
Then $\rho$ extends to $\rho\colon\tilde M\to\cZ(\tilde N)$ by Lemma~\ref{lemma_rho_extend}. It is straightforward 
to check that the map
$$
\tilde M\times \tilde N\ni (x,y)\mapsto (x, \rho(x)L(y))\in \tilde M\times \tilde N
$$
descends to a (model) map $\bar f\colon M\times N\to M\times N$. We continue abusing notation and denote by 
$L$ the induced automorphism $L\colon N\to N$ and by $\rho$ the induced map $\rho\colon M\to \cZ(\tilde N)/\cZ(G)$. 
Then we can write
$$
\bar f(x,y)=(x,\rho(x)L(y)).
$$
By construction, the induced homomorphism $\bar f_\#\colon \pi_1(M\times N)\to\pi_1(M\times N)$ is given by
$$
\bar f_\#(\gamma,g)=(\gamma,\rho(\gamma)L(g)).
$$
Hence, by the discussion in the previous subsection, $f_\#$ and $\bar f_\#$ are conjugate. Therefore $f$ and 
$\bar f$ are homotopic maps. 

\begin{remark} Note that because $\rho$ is smooth the model map $\bar f$ is, in fact, a diffeomorphism. However 
this is not used in the sequel. We only use smoothness of $\rho$ for Lemma~\ref{lemma_theta}.
\end{remark}

\subsection{Hyperbolicity of the model}
\label{sec_34}
Consider a gradient vector field on $M$ with finitely many fixed points $q_1, q_2,\ldots ,q_k$, each of which is 
hyperbolic. Denote by $i_1, i_2,\ldots ,i_k$ the dimensions of the unstable manifolds at $q_1, q_2,\ldots ,q_k$, 
respectively. Recall that the classical Poincar\'e-Hopf theorem yields the following formula for the Euler characteristic:
\begin{equation}
\label{eq_euler}
\chi(M)=\sum_{j=1}^k ind(q_j)=\sum_{j=1}^k(-1)^{i_j}
\end{equation}
Denote by $\varphi$ the time-one map of the gradient flow and define $\bar{\bar f}\colon M\times N\to M\times N$ 
as follows
$$
\bar{\bar f}(x,y)=(\varphi(x),\rho(x)L(y))
$$
By a direct calculation one can see that 
$$
(\bar{\bar f}\,)^m(x,y)=(\varphi^m(x), \rho_m(x)L^m(y)),
$$
where $\rho_m\colon M\to \cZ(\tilde N)/\cZ(G)$ has an explicit expression in terms of $\rho$ and $L$.
Note that, for each of the maps $(\bar{\bar f}\,)^m$, the only possible fixed points must lie on the fibers
$\{q_i\} \times N$. 

We now proceed to modify the maps $(\bar{\bar f}\,)^m$, in order to control the fixed points on each of the fibers.
For each $m\ge 1$ and each $j=1,\ldots ,k$ consider a small perturbation $\widetilde{L^m_j}$ of 
$\rho_m(q_j)L^m\colon N\to N$ such that $\widetilde{L^m_j}$ has finitely many fixed points, each of which is 
hyperbolic. Now consider a small perturbation of $(\bar{\bar f}\,)^m$ of the form
$$
\widetilde{f^m}(x,y)=(\varphi^m(x),\widetilde{L^m_x}(y))
$$
such that $\widetilde{L^m_{q_j}}=\widetilde{L^m_j}$ for $j=1,\ldots ,k$. Note that we have homotopies
$\widetilde{L^m_j}\simeq \rho_m(q_j)L^m\simeq L^m$ and hence
\begin{equation}
\label{eq_fiber_lefschetz}
\Lambda(\widetilde{L^m_j})=\Lambda(L^m)
\end{equation}
Also note that by construction $\widetilde{f^m}$ has finitely many fixed points each of which has the form $(q_j,y)$, 
$j=1,\ldots ,k$. Moreover,
\begin{equation}
\label{eq_index_formula}
ind_{\widetilde{f^m}}(q_j,y)=(-1)^{i_j}ind_{\widetilde{f^m}|_{\{q_j\}\times N}}(y)=(-1)^{i_j}ind_{\widetilde{L^m_j}}(y)
\end{equation}

From the homotopies $f\simeq \bar f \simeq \bar{\bar f}$ and $(\bar{\bar f}\,)^m\simeq \widetilde{f^m}$ we can carry 
out the following calculation of the Lefschetz number.
\begin{align*}
\Lambda(f^m)=\Lambda(\widetilde{f^m})&=\sum_{(q_j,y)\in Fix(\widetilde{f^m})}ind_{\widetilde{f^m}}(q_j,y)
\stackrel{(\ref{eq_index_formula})}{=}\sum_{j=1}^k\left[(-1)^{i_j}\sum_{y\in Fix(\widetilde{L^m_j})}ind_{\widetilde{L^m_j}}(y)\right]\\
&=\sum_{j=1}^k(-1)^{i_j}\Lambda({\widetilde{L^m_j}})\stackrel{(\ref{eq_fiber_lefschetz})}{=}\sum_{j=1}^k(-1)^{i_j}\Lambda(L^m)
\stackrel{(\ref{eq_euler})}{=}\chi(M)\Lambda(L^m)
\end{align*}
Note that if the Euler characteristic $\chi(M)$ vanishes (for instance, if $M$ is odd-dimensional) then we immediately obtain a contradiction with~(\ref{form_asymp_general}){: the Lefschetz number simultaneously must grow exponentially fast, and must equal zero. This already completes the proof of the {\bf Main Theorem} in the special case where $\chi(M)=0$. The rest of the paper deals with the case where $\chi(M)\neq 0$.

\vskip 5pt

Now if $\chi(M)\neq 0$}, by combining the above formula with~(\ref{eq_lefschetz}), (\ref{form_asymp_general}) and (\ref{eq_manning}) we obtain the following equality for all $m\ge 1$.
$$
\frac{r}{\chi(M)}e^{mh_{top}(f)}+o(e^{mh_{top}(f)})=\prod_{\lambda \in spec(L)}|1-\lambda^m|
$$
Modulo the coefficient $r/\chi(M)$ this equality is the same as equation~(2) in~\cite{Mann2}. Therefore, 
the argument of Manning~\cite[pp. 425-426]{Mann2} yields the following result.
\begin{lemma}
\label{lemma_hyperbolic}
The eigenvalues of $L$ lie off the unit circle, \ie $L\colon N\to N$ is an Anosov automorphism.
\end{lemma}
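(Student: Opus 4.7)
The plan is to follow Manning's argument from~\cite[pp.~425-426]{Mann2} essentially verbatim, since the equation
$$
\frac{r}{\chi(M)}e^{mh_{top}(f)}+o(e^{mh_{top}(f)})=\prod_{\lambda \in spec(L)}|1-\lambda^m|
$$
differs from the identity analyzed in~\cite{Mann2} only by the harmless positive multiplicative constant $r/\chi(M)$. First I would take logarithms of both sides; the constant then contributes only an additive $O(1)$ error, so matching the exponential growth reduces to the asymptotic
$$
\sum_{\lambda\in spec(L)}\log|1-\lambda^m| = mh_{top}(f)+o(m).
$$

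Next I would split the spectrum of $L$ into three parts according to $|\lambda|>1$, $|\lambda|<1$, and $|\lambda|=1$. The first two contribute predictably (namely $m\log|\lambda|+o(1)$ and $o(1)$ respectively), so the heart of the matter is showing that no eigenvalue has modulus $1$. The roots-of-unity case is immediate: if $\lambda^{m_0}=1$ for some $m_0\geq 1$, then the product on the right vanishes for every $m\in m_0\Z$, while the left-hand side grows exponentially. Note here that $h_{top}(f)>0$ for any Anosov diffeomorphism, since combining~(\ref{eq_lefschetz}) with~(\ref{form_asymp_general}) already forces $|Fix(f^m)|$ to grow exponentially, and this would contradict the vanishing of the right-hand side along the subsequence.

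The main obstacle, and the place where one genuinely needs nontrivial input, is ruling out eigenvalues on the unit circle that are not roots of unity. Since the eigenvalues of $L$ are the roots of the characteristic polynomial of the integer matrix representing the induced Lie algebra automorphism in Mal$'$cev coordinates on the lattice, they are algebraic numbers, and a Gelfond--Baker type lower bound on $|1-\lambda^m|$ for such $\lambda$ prevents any term on the right from shrinking fast enough to disrupt the required strictly linear leading growth in $m$. This is exactly the estimate used by Manning, and since the extra constant $r/\chi(M)$ disappears into the $O(1)$ error after taking logs, his analysis transfers without modification. Once all eigenvalues lie off the unit circle, $L$ is hyperbolic on the Lie algebra of $\tilde N$, which is precisely the statement that $L\colon N\to N$ is an Anosov automorphism.
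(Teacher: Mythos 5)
Your top-level plan---follow Manning's argument from \cite{Mann2} and observe that the extra constant $r/\chi(M)$ is harmless---is exactly what the paper does: the paper's entire proof is the observation that the displayed identity agrees with Manning's equation (2) up to this constant, followed by a citation of Manning's pages 425--426. So you have identified the right strategy. However, your sketch of what Manning's argument actually does contains a gap, precisely in the case you flag as the heart of the matter.

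The problem is one of direction. After taking logarithms you deliberately weaken the identity to $\sum_{\lambda}\log|1-\lambda^m| = m\,h_{top}(f) + o(m)$, discarding the additive constant $\log\bigl(r/\chi(M)\bigr)$ and replacing the $o(1)$ error by $o(m)$. You then invoke a Gelfond--Baker \emph{lower} bound on $|1-\lambda^m|$ for algebraic $\lambda$ of modulus one. But such a lower bound shows that the unit-circle factors contribute at most $O(\log m)$ to $\sum_\lambda\log|1-\lambda^m|$, which is $o(m)$ --- i.e.\ it shows these eigenvalues \emph{do not} disrupt your weakened asymptotic. So after this step you have established consistency, not a contradiction; the argument as written terminates without proving anything. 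A Gelfond--Baker input is the wrong tool here because the danger you need to rule out is not that $\prod_{|\lambda|=1}|1-\lambda^m|$ shrinks too fast, but that it \emph{refuses to converge}.

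To close the gap you must retain the full strength of the identity: writing $C_m:=\prod_{|\lambda|=1}|1-\lambda^m|$ and $D:=\prod_{|\lambda|>1}|\lambda|$, the terms off the circle give $C_m\,D^m(1+o(1)) = \tfrac{r}{\chi(M)}e^{m h_{top}(f)}(1+o(1))$. Since $C_m$ is bounded (each factor is at most $2$) and not eventually zero (the left side is a positive integer for large $m$, being $|\Lambda(L^m)|=|Fix(f^m)|$), one deduces $e^{h_{top}(f)}=D$ and hence $C_m\to r/\chi(M)>0$. Now if $\lambda=e^{2\pi i\theta}$ with $\theta$ irrational appears in the spectrum, density of $\{m\theta\bmod 1\}$ forces $|1-\lambda^m|$ --- and hence $C_m$, since the remaining factors are bounded by $2$ --- to come arbitrarily close to $0$ along a subsequence, contradicting $C_m\to r/\chi(M)$. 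This density step is elementary and is what replaces your Gelfond--Baker appeal; the roots-of-unity case you handled correctly.
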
 

\subsection{Further reduction: a global change of coordinates}
The restriction of the map
$$
\tilde N\ni y\mapsto L(y)y^{-1}\in \tilde N
$$
to the center of $\tilde N$ is a homomorphism. By Lemma~\ref{lemma_hyperbolic}, the restriction of $L$ to $\cZ(\tilde N)$ 
is hyperbolic, and hence, the above homomorphism is, in fact, an automorphism. {Note that, since $\cZ(\tilde N)$ 
is a Euclidean space, the restriction of $L$ to $\cZ(\tilde N)$ can be represented by a matrix (also denoted $L$). The 
map $y\mapsto L(y)y^{-1}$ on $\cZ(\tilde N)$ is then represented by the matrix $L-id$.}
Denote by $(L-id)^{-1}$ the inverse of the restriction of this automorphism to $\cZ(G)\subset\cZ(\tilde N)$. 
Define a homomorphism $\theta\colon\Gamma\to\cZ(G)$ by
$$
\theta(\gamma)=(L-id)^{-1}\rho(\gamma).
$$
And define an automorphism $h_\#\colon \Gamma\times G\to\Gamma\times G$ by
\begin{equation}
\label{eq_h_sharp}
h_\#(\gamma,g)=(\gamma, \theta(\gamma)g).
\end{equation}
It is straightforward to check that 
$$
f^{new}_\#\stackrel{\mathrm{def}}{=}h_\#\circ f_\#\circ  h_\#^{-1}
$$
is given by
$$
f^{new}_\#=(\alpha(\gamma),L(g)).
$$
\begin{lemma}\label{lemma_theta}
There exists a self diffeomorphism $h\colon M\times N\to M\times N$ such that the induced automorphism 
$h_\#\colon \Gamma\times G\to\Gamma\times G$ is given by~(\ref{eq_h_sharp}).
\end{lemma}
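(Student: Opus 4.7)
The plan is to construct $h$ by lifting to the universal cover and defining an explicit twist map in the fiber direction, using $\theta$ itself as the twist. The motivation is that the formula~(\ref{eq_h_sharp}) is entirely analogous to the expression for $\bar f_\#$, just with $L$ replaced by the identity on $G$ and $\rho$ replaced by $\theta$. So $h$ should be to the identity what $\bar f$ is to the fiberwise map $L$.

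First I would invoke Lemma~\ref{lemma_rho_extend} applied to $\theta:\Gamma\to\cZ(G)$ (whose hypotheses are identical to those satisfied by $\rho$) to produce a smooth $\Gamma$-equivariant extension $\theta\colon\tilde M\to\cZ(\tilde N)$. Then I would define the candidate lift
$$
\tilde h\colon \tilde M\times\tilde N\to\tilde M\times\tilde N,\qquad \tilde h(x,y)=(x,\theta(x)y).
$$
This is clearly a diffeomorphism with inverse $(x,y)\mapsto(x,\theta(x)^{-1}y)$.

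Next I would verify equivariance of $\tilde h$ with respect to the deck action of $\Gamma\times G$ on $\tilde M\times\tilde N$ so that it descends to a diffeomorphism $h\colon M\times N\to M\times N$ with the prescribed induced map on $\pi_1$. For $(\gamma,g)\in\Gamma\times G$, the equivariance condition reads
$$
\tilde h\big((\gamma,g)\cdot(x,y)\big)=h_\#(\gamma,g)\cdot\tilde h(x,y),
$$
and a direct calculation gives
$$
\tilde h(\gamma x,gy)=(\gamma x,\theta(\gamma x)gy)=(\gamma x,\theta(\gamma)\theta(x)gy)=(\gamma x,\theta(\gamma)g\theta(x)y),
$$
where the last equality uses that $\theta(x)\in\cZ(\tilde N)$ is central. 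This matches $(\gamma,\theta(\gamma)g)\cdot(x,\theta(x)y)$, so $\tilde h$ descends to the desired $h$ and the induced homomorphism on $\pi_1$ is exactly~(\ref{eq_h_sharp}).

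I do not anticipate any genuine obstacle here. The only point that requires care is the commutation step, which is the whole reason the construction goes through: centrality of the image of $\theta$ is precisely what makes the twist by $\theta(x)$ compatible with right multiplication by $g\in G$. All other properties (smoothness, being a diffeomorphism, equivariance in the base coordinate) are immediate from Lemma~\ref{lemma_rho_extend} and the explicit form of $\tilde h$.
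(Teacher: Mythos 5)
Your proposal is correct and takes essentially the same approach as the paper: invoke Lemma~\ref{lemma_rho_extend} to get a smooth equivariant extension of the relevant homomorphism, then define $h$ as the fiberwise twist $(x,y)\mapsto(x,\theta(x)y)$. The only cosmetic difference is that the paper extends $\rho$ first and then sets $\theta(x)=(L-\mathrm{id})^{-1}\rho(x)$, whereas you apply the extension lemma directly to $\theta$; you also write out the equivariance/descent check (including the crucial use of centrality of $\theta(x)$) that the paper leaves implicit.
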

\begin{remark}
% An automorphism of the fundamental group of an aspherical manifold cannot always be realized by a self diffeomorphism
{-- see Farrell and Jones} \cite{FJ}.
\end{remark}
\begin{proof}
Recall that, by Lemma~\ref{lemma_rho_extend}, the homomorphism $\rho\colon \Gamma\to\cZ(G)$ 
extends to a smooth equivariant map $\rho\colon\tilde M\to \cZ(G)$. Hence, by letting
$$
\theta(x)=(L-id)^{-1}\rho(x)
$$
we extend $\theta\colon\Gamma\to\cZ(G)$ to a smooth equivariant map, which in turn descends to a map 
$\theta\colon M\to\cZ(G)$. Then the formula
$$
h(x,y)=(x,\theta(x)y)
$$
defines the posited diffeomorphism.
\end{proof}

Now let
$$
f^{new}=h\circ f\circ h^{-1}.
$$
Clearly $f^{new}$ is also an Anosov diffeomorphism. Thus, by replacing $f$ with $f^{new}$ if necessary, we can 
assume the following.
\begin{enumerate}
\item[R4.]
{\it 
The induced automorphism $f_\#\colon \pi_1(M\times N, p)\to\pi_1(M\times N, p)$ has the following form
$$
f_\#(\gamma, g)=(\alpha(\gamma), L(g)).
$$}
\end{enumerate}

\subsection{A locally maximal hyperbolic set $K$ by applying Franks' theorem}
\label{sec_franks}
Consider the diagram
$$
\xymatrix{
\Gamma\times G\ar[d]\ar^{f_\#}[r] & \Gamma\times G \ar[d] \\
G\ar^L[r] & G
}
$$
By the reduction in the previous subsection this diagram commutes. Also, recall that $L\colon N\to N$ is an 
Anosov automorphism, and hence, is a $\pi_1$-diffeomorphism in the sense of Franks. Then Franks' thesis~\cite{Fr} 
yields a semi-conjugacy $h\colon M\times N\to N$, which induces $h_\#\colon (\gamma,g)\mapsto g$ and makes the 
diagram 
$$
\xymatrix{
M\times N\ar_h[d]\ar^{f}[r] & M\times N \ar_h[d] \\
N\ar^L[r] & N
}
$$
commute.

Define
$$
K=h^{-1}(h(p)).
$$
(Recall that $p$ is a fixed point of $f$.) Clearly $K$ is an $f$-invariant closed set.

\subsection{$K$ contains infinitely many periodic points} 

\begin{lemma}
\label{lemma_loc_maximal}
The set $K$ is a locally maximal hyperbolic set. (Hence has local product structure by Proposition
\ref{prop_local_prod_structure}.)
\end{lemma}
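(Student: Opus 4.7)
The plan is to establish the three properties of $K = h^{-1}(h(p))$ needed for local maximality: closedness and $f$-invariance, hyperbolicity of the splitting, and the existence of an isolating neighborhood.

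First I would verify the easy structural properties. The set $K$ is closed since $h$ is continuous. It is $f$-invariant because the semi-conjugacy gives $h(f(x)) = L(h(x)) = L(h(p)) = h(p)$ for any $x \in K$ (using that $h(p)$ is a fixed point of $L$, as $L(h(p)) = h(f(p)) = h(p)$). Hyperbolicity is then automatic: since $f$ is Anosov on $M \times N$, the splitting $T(M \times N) = E^s \oplus E^u$ restricts to $K$ and the uniform exponential estimates~(\ref{def_anosov}) descend without change, giving $K$ the structure of a hyperbolic set for $f$.

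The heart of the proof is local maximality, and the main obstacle is that the semi-conjugacy $h$ is only continuous and its fibers could a priori be wild. The key insight I would exploit is that $L$ is an Anosov automorphism of the nilmanifold $N$, hence \emph{expansive}: there exists $\delta_0 > 0$ so that for any $y_1 \neq y_2$ in $N$ one has $d(L^m y_1, L^m y_2) > \delta_0$ for some $m \in \mathbb{Z}$. Because $K$ is compact (closed in a compact manifold) and $h$ is continuous, I can choose an open neighborhood $V$ of $K$ in $M \times N$ with $h(V) \subset B_{\delta_0/2}(h(p))$.

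With this choice of $V$ in hand, the verification of~(\ref{eq_loc_maximal}) is immediate. The inclusion $K \subset \bigcap_m f^m(V)$ holds by $f$-invariance. Conversely, if $x \in \bigcap_m f^m(V)$, then $f^m(x) \in V$ for every $m \in \mathbb{Z}$, so
$$d\bigl(L^m h(x), h(p)\bigr) = d\bigl(h(f^m(x)), h(p)\bigr) < \delta_0/2$$
for every $m$. Since $L^m h(p) = h(p)$, expansivity of $L$ forces $h(x) = h(p)$, i.e.\ $x \in K$. This gives $K = \bigcap_{m \in \mathbb{Z}} f^m(V)$, completing the proof; local product structure then follows from Proposition~\ref{prop_local_prod_structure}.
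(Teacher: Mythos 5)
Your proof is correct, and it uses the same essential mechanism as the paper: control the $h$-image of an orbit that stays in a small neighborhood of $K$, using that $L$ is Anosov on $N$ together with the semi-conjugacy $h\circ f = L\circ h$. The only cosmetic difference is that the paper invokes that $\{h(p)\}$ is a locally maximal hyperbolic set for $L$ and takes $V = h^{-1}(U)$, whereas you invoke expansivity of $L$ with constant $\delta_0$ and take any open $V\supset K$ with $h(V)\subset B_{\delta_0/2}(h(p))$; for Anosov maps these two properties of the fixed point are equivalent, so the arguments are interchangeable.
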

\begin{proof}
The set $K$ is hyperbolic because $f$ is Anosov. We check that $K$ is locally maximal by using the 
definition~(\ref{eq_loc_maximal}).
The point $h(p)$ is a locally maximal hyperbolic set for $L$, \ie
$$
h(p)=\bigcap_{m\in\Z}L^m(U),
$$
where $U$ is a sufficiently small neighborhood of $h(p)$. Let $V=h^{-1}(U)$, then
\begin{align*}
K=h^{-1}(h(p))&=h^{-1}\left(\bigcap_{m\in\Z}L^m(U)\right)=\bigcap_{m\in\Z}h^{-1}(L^m(U))\\
&=\bigcap_{m\in\Z} f^k(h^{-1}(U))=\bigcap_{m\in\Z} f^k(V).\\
\end{align*}
establishing the Lemma.
\end{proof}
We will now exploit the fact that the dimension of $M$ is positive.
\begin{lemma}
\label{lemma_uncountable}
The set $K$ is uncountable.
\end{lemma}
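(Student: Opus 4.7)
My goal is to show that the first-coordinate projection $\pi_M \colon K \to M$ is surjective; since $M$ is a closed manifold of positive dimension and therefore uncountable, this will give $|K| \geq |M|$ and complete the proof. The whole strategy rests on recognizing the Franks semi-conjugacy $h \colon M \times N \to N$ as being homotopic to the second-coordinate projection, and then using degree theory slicewise.

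\textbf{Step 1: $h$ is homotopic to $\pi_N$.} The nilmanifold $N$ is a $K(G,1)$, so any two maps $X \to N$ from a space with the homotopy type of a CW complex are homotopic precisely when the induced homomorphisms $\pi_1(X) \to G$ agree up to conjugation. Franks' construction recalled in Section~\ref{sec_franks} ensures that $h_\# \colon (\gamma,g) \mapsto g$, which is exactly the homomorphism induced by $\pi_N$. Hence there is a homotopy $H \colon (M \times N) \times [0,1] \to N$ with $H_0 = \pi_N$ and $H_1 = h$.

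\textbf{Step 2: Slicewise surjectivity.} For each fixed $x_0 \in M$, define $h_{x_0} \colon N \to N$ by $h_{x_0}(g) = h(x_0, g)$. Restricting $H$ to $\{x_0\} \times N \times [0,1]$ yields a homotopy from $\pi_N|_{\{x_0\} \times N} = \mathrm{Id}_N$ to $h_{x_0}$. Since $h_{x_0}$ is homotopic to the identity of the closed oriented manifold $N$, it has degree $1$, and is therefore surjective. In particular, there exists $g_{x_0} \in N$ with $h_{x_0}(g_{x_0}) = h(p)$, which means $(x_0, g_{x_0}) \in K$.

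\textbf{Step 3: Conclusion.} The assignment $x_0 \mapsto (x_0, g_{x_0})$ exhibits a section of $\pi_M|_K$; in particular $\pi_M(K) = M$, and distinct $x_0$ produce distinct elements of $K$ because their first coordinates differ. Therefore $|K| \geq |M|$, which is uncountable (as $M$ is a closed manifold with $\dim M \geq 1$).

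\textbf{Main obstacle.} The substantive point is Step 1, the identification of $h$ with $\pi_N$ up to homotopy; once that is in hand, Steps 2 and 3 are formal. The orientation hypothesis required for the degree argument in Step 2 is available thanks to reductions (R1)--(R2). One could alternatively package Steps 2--3 by defining $\Phi \colon M \times N \to M \times N$ via $\Phi(x,g) = (x, h(x,g))$, which is homotopic to the identity and therefore degree-$1$ and surjective, but the slicewise phrasing above is slightly more economical.
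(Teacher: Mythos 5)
Your proof is correct and follows essentially the same route as the paper's: both arguments reduce to showing that for each $x\in M$ the restriction $h|_{\{x\}\times N}\colon N\to N$ is surjective because it has degree one, which follows from the fact that $h_\#\colon(\gamma,g)\mapsto g$ together with the asphericity of $N$. The only cosmetic difference is that you establish degree one by first homotoping $h$ to $\pi_N$ and restricting the homotopy to slices, whereas the paper observes directly that each slice restriction induces an isomorphism on $\pi_1$ and hence on top-degree cohomology.
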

\begin{proof}
For each $x\in M$ consider the restriction $h|_{\{x\}\times N}\colon N\to N$. This restriction induces an 
isomorphism of fundamental groups. {The isomorphism of fundamental groups induces an isomorphism
on top-degree (group) cohomology. Since $N$ is aspherical, this implies the map induces an isomorphism between
the top-degree cohomology of the spaces. Hence the map has degree one, so must be onto. We conclude that 
$h^{-1}(h(p))\cap (\{x\}\times N)\neq\varnothing$,  for each $x\in M$. Since $\dim(M)>0$, this shows $K$ is uncountable.}
\end{proof}

We conclude that Proposition~\ref{prop_periodic} applies to $K$ and yields the following lemma.
\begin{lemma}\label{lemma_periodic_points}
There are infinitely many periodic points in the invariant set $K$.
\end{lemma}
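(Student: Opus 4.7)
The plan is very short because essentially all the work has already been done in the preceding lemmas of this subsection. My proof would be a one-line invocation of Proposition~\ref{prop_periodic}.

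First, I would observe that Lemma~\ref{lemma_loc_maximal} tells us $K$ is a locally maximal hyperbolic set for $f$. Second, Lemma~\ref{lemma_uncountable} tells us $K$ is uncountable. These are precisely the two hypotheses of Proposition~\ref{prop_periodic}, which asserts that any uncountable locally maximal hyperbolic set for a diffeomorphism contains infinitely many periodic points of that diffeomorphism. Applying the proposition to $f|_K$ yields the conclusion.

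There is no real obstacle here; the content of the lemma has been packaged into the three prior results (the local product / local maximality structure coming from the Franks semiconjugacy, the uncountability coming from the degree-one argument on fibers $\{x\}\times N$, and the abstract hyperbolic dynamics input from Section~\ref{sec_hyp_sets}). The only thing worth being careful about is the logical role of $f|_K$ versus $f$: Proposition~\ref{prop_periodic} produces infinitely many periodic points of the restriction $f|_K$, but since $K$ is $f$-invariant, these are genuine periodic points of $f$ lying in $K$, which is exactly what the statement of Lemma~\ref{lemma_periodic_points} asserts. So the proof reduces to citing Lemma~\ref{lemma_loc_maximal}, Lemma~\ref{lemma_uncountable}, and Proposition~\ref{prop_periodic} in sequence, with no additional argument required.
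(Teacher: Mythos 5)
Your proposal is correct and matches the paper's argument exactly: the paper states precisely that Proposition~\ref{prop_periodic} applies to $K$ in light of Lemma~\ref{lemma_loc_maximal} and Lemma~\ref{lemma_uncountable}, and gives no further proof. Your remark about $f|_K$ versus $f$ is a fine clarification but introduces no new content.
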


\subsection{Lifting the dynamics to $M\times\tilde N$}
Let $\bar\pi\colon\tilde N\to N$ be the universal covering. By taking the product with the identity map $id_M$ we 
obtain the covering
$$
\pi\colon M\times\tilde N\to M\times N.
$$ 
Choose a base point $\tilde p\in M\times\tilde N$ such that $\pi(\tilde p)=p$. Then, using reduction R4, we see that 
$f\colon M\times N\to M\times N$ uniquely lifts to $\tilde f\colon M\times \tilde N\to M\times\tilde N$ with 
$\tilde f(\tilde p)=\tilde p$. We denote by $d$ the distance induced by the (lifted) Riemannian metric on $M\times\tilde N$.

\begin{lemma}
\label{lemma_homotopic_to_identity}
The diffeomorphism $\tilde f$ is homotopic to the model diffeomorphism $id_M\times\tilde L\colon (x,y)\mapsto (x, \tilde Ly)$ via a homotopy $H$. Moreover,
$d(H_t, id_M\times\tilde L)$ is uniformly bounded for all $t\in[0,1]$.
\end{lemma}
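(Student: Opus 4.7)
The plan is to first use asphericity to produce a homotopy, and then exploit the twisted $G$-equivariance of both maps to arrange it to have bounded tracking.

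For existence, $M\times\tilde N$ is aspherical: $M$ is aspherical by hypothesis, and $\tilde N$ is a simply-connected nilpotent Lie group, hence diffeomorphic to Euclidean space via $\exp$ and contractible. Consequently $\pi_1(M\times\tilde N,\tilde p)=\Gamma$. By R4 and the defining property $\tilde f(\tilde p)=\tilde p$, the map $\tilde f$ induces $\alpha$ on $\pi_1$, while $id_M\times\tilde L$ induces the identity. Reduction R3 says $\alpha$ is inner and hence conjugate in $\text{Aut}(\Gamma)$ to $id_\Gamma$. By the standard obstruction theory for maps into aspherical targets (two self-maps are freely homotopic iff their $\pi_1$-maps are conjugate), we obtain a homotopy $\tilde f\simeq id_M\times\tilde L$.

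For bounded tracking, exploit the deck-equivariance. Reduction R4 yields $\tilde f\circ T_g=T_{L(g)}\circ\tilde f$ for every deck transformation $T_g(x,y)=(x,gy)$, $g\in G$; the model $id_M\times\tilde L$ satisfies the identical relation because $\tilde L|_G=L$. The lifted Riemannian metric on $M\times\tilde N$ is $G$-invariant, so the function $z\mapsto d(\tilde f(z),(id_M\times\tilde L)(z))$ is $G$-invariant, descends to the compact quotient $M\times N$, and is bounded by some $C$. To turn this into a bounded homotopy, pick any homotopy $h_t:M\times N\to M\times N$ from $f$ to $id_M\times L$; its tracks $t\mapsto h_t(z)$ have uniformly bounded length by compactness. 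Lifting uniquely with $\tilde h_0=\tilde f$ yields $\tilde h_t$, and path lifting guarantees that $d(\tilde h_t(z),\tilde f(z))$ is uniformly bounded; combined with the $C$-bound, so is $d(\tilde h_t(z),(id_M\times\tilde L)(z))$.

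The main obstacle is reconciling the endpoint. The lift $\tilde h_1$ is a priori equal to $T_{g_0}\circ(id_M\times\tilde L)$ for some $g_0\in G$, not necessarily $id_M\times\tilde L$ itself. However, $L$-twisted equivariance is preserved under lifted homotopies, so $T_{g_0}\circ(id_M\times\tilde L)$ must be $L$-twisted equivariant, and a direct computation forces $g_0 L(g)=L(g)g_0$ for all $g\in G$, i.e., $g_0\in\cZ(G)\subset\cZ(\tilde N)$. Since $\cZ(\tilde N)$ is a Euclidean space (the center of a simply-connected nilpotent Lie group), I would connect $g_0$ to $e$ by a straight path $g_0(s)$ in $\cZ(\tilde N)$ and concatenate the translation homotopy $T_{g_0(s)}\circ(id_M\times\tilde L)$ onto $\tilde h_t$. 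For each $s$, the displacement $y\mapsto d(g_0(s)y,y)$ is $G$-invariant on $\tilde N$ (central elements commute with $G$), descends to compact $N$, and is uniformly bounded in $s\in[0,1]$; so the concatenated homotopy ends at $id_M\times\tilde L$ with bounded tracking throughout.
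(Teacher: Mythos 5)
Your proposal is correct and follows the paper's basic strategy: choose a homotopy from $f$ to $id_M\times L$ on the compact quotient $M\times N$ (using asphericity and agreement of outer automorphisms), lift it to $M\times\tilde N$, and deduce the bound from compactness downstairs. Where you go beyond the paper is in paragraphs two and three: the paper simply asserts ``This homotopy lifts to the posited homotopy $H$,'' glossing over the fact that the lift starting at $\tilde f$ ends a priori at $T_{g_0}\circ(id_M\times\tilde L)$ for some deck element $g_0\in G$, not necessarily at $id_M\times\tilde L$ itself. You correctly observe that both $\tilde f$ and $id_M\times\tilde L$ are $L$-twisted equivariant, that this equivariance is preserved along the lifted homotopy (by continuity into the discrete deck group), and hence that $g_0$ must commute with $L(G)=G$, i.e.\ $g_0\in\cZ(G)\subset\cZ(\tilde N)$. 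Since $\cZ(\tilde N)$ is Euclidean and a central element has uniformly bounded displacement, the remaining gap closes by a bounded fiberwise translation homotopy. This is a genuine refinement: for a non-central $g_0$ the displacement function $z\mapsto d(T_{g_0}z,z)$ is unbounded on $\tilde N$ (e.g.\ for non-central elements of the Heisenberg group), so the centrality really is needed and is not obvious without the twisted-equivariance argument. Your first paragraph, producing a homotopy directly on $M\times\tilde N$ via asphericity, is correct but ends up redundant since the construction you actually use comes from lifting the downstairs homotopy; you could safely drop it.
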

\begin{proof} 
By reduction R4 the diffeomorphism $f\colon M\times N\to M\times N$ and the model map $(x,y)\mapsto (x, Ly)$ 
induce the same outer automorphism of $\pi_1(M\times N)$ and hence are homotopic~\cite[Proposition 1B.9]{hatcher}. 
This homotopy lifts to the posited homotopy $H$ on $M\times \tilde N$.
The distance $d(H_t, id_M\times\tilde L)$ is bounded because $H$ is a lift of a homotopy on a compact manifold $M\times N$.
\end{proof}
We also lift the semi-conjugacy $h\colon M\times N\to N$ constructed in Subsection~\ref{sec_franks} to a 
semi-conjugacy $\tilde h\colon M\times \tilde N\to\tilde N$ and the automorphism $L\colon N\to N$ to an 
automorphism $\tilde L\colon\tilde N\to\tilde N$ so that $\tilde L(\tilde h(\tilde p))=\tilde h(\tilde p)$. Then we 
have the following commutative diagram.
$$
\xymatrix{
  *[r]{\;(M\times\tilde N,\tilde p)\;} \ar@<4ex>_\pi[d]\ar@<0.1ex>[rr]^-{\tilde h} \ar@(dl,ul)[]^{\tilde f}    
  & & *[l]{\;(\tilde N, \tilde h(\tilde p))\,}  \ar@(dr,ur)[]_{\tilde L} \ar@<-5.3ex>^{\bar\pi}[d]\\
    *[r]{\;(M\times N, p)\;} \ar@<0.1ex>[rr]^-{ h} \ar@(dl,ul)[]^{ f}    && *[l]{\;( N,  h( p))\,}  \ar@(dr,ur)[]_{ L} 
}
$$
We now define the set $\tilde K:=\tilde h^{-1}(\tilde h(\tilde p))$. Clearly $\tilde K$ is $\tilde f$-invariant.
\begin{lemma}
The restriction $\pi|_{\tilde K}\colon\tilde K\to K$ is a homeomorphism.
\end{lemma}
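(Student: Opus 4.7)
The plan is to exploit the fact that $\tilde h$ is equivariant with respect to the natural $G$-actions on its source and target. Specifically, $G=\pi_1N$ acts on $M\times\tilde N$ as the deck group of $\pi$ (trivially on the $M$-factor, by deck transformations on the $\tilde N$-factor), and on $\tilde N$ as the deck group of $\bar\pi$. Standard lifting theory says that the resulting ``twisting'' homomorphism $G\to G$ is determined by $h_\#$; by the reduction in Subsection~\ref{sec_franks}, $h_\#\colon \Gamma\times G\to G$ is projection to the second factor, so the twisting homomorphism is the identity. Hence
$$\tilde h(g\cdot \tilde q)=g\cdot \tilde h(\tilde q)\qquad \text{for all } g\in G,\ \tilde q\in M\times\tilde N.$$
This identity is the only substantive input; the rest of the argument is deck-transformation bookkeeping.

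Next I would establish that $\pi|_{\tilde K}$ is a bijection. For \emph{surjectivity}, given $y\in K$, pick any lift $\tilde y_0\in\pi^{-1}(y)$; the commutative diagram gives $\bar\pi(\tilde h(\tilde y_0))=h(y)=h(p)=\bar\pi(\tilde h(\tilde p))$, so $\tilde h(\tilde y_0)=g\cdot \tilde h(\tilde p)$ for some $g\in G$. By equivariance, the adjusted lift $\tilde y:=g^{-1}\cdot \tilde y_0$ lies in $\tilde K$ and still projects to $y$. For \emph{injectivity}, two points of $\tilde K$ projecting to the same $y$ differ by a deck transformation $g\in G$; equivariance turns this into $\tilde h(\tilde p)=g\cdot \tilde h(\tilde p)$, and freeness of the $G$-action on $\tilde N$ forces $g=\mathrm{id}$.

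Continuity of $\pi|_{\tilde K}$ is immediate; for \emph{continuity of the inverse} I would argue sequentially. Suppose $y_n\to y$ in $K$, and let $\tilde y_n,\tilde y\in\tilde K$ be the unique preimages. Choose an evenly covered neighborhood $U$ of $y$ with a distinguished sheet $\tilde U\ni\tilde y$; for large $n$ the point $y_n$ has a unique lift $\hat y_n\in\tilde U$, and $\hat y_n\to\tilde y$. Writing $\tilde y_n=g_n\cdot\hat y_n$ for some $g_n\in G$, equivariance gives
$$g_n^{-1}\cdot\tilde h(\tilde p)=\tilde h(\hat y_n)\longrightarrow \tilde h(\tilde y)=\tilde h(\tilde p).$$
Since $G$ acts properly discontinuously on $\tilde N$, the orbit $G\cdot\tilde h(\tilde p)$ is discrete, so $g_n=\mathrm{id}_G$ eventually, and $\tilde y_n=\hat y_n\to\tilde y$.

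I do not anticipate a genuine obstacle beyond verifying the equivariance of $\tilde h$ at the outset; once that is in hand, bijectivity and continuity of the inverse both reduce to elementary statements about the properly discontinuous $G$-action on $\tilde N$.
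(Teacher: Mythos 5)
Your proposal is correct and follows essentially the same route as the paper. The paper's proof is more compact: it computes
\[
\pi^{-1}(K)=\tilde h^{-1}\big(\bar\pi^{-1}(h(p))\big)=\tilde h^{-1}\Big(\bigcup_{g\in G}g.\tilde h(\tilde p)\Big)=\bigsqcup_{g\in G}g.\tilde K,
\]
observes the union is disjoint, and concludes that each $g.\tilde K$ projects homeomorphically onto $K$. The hidden input in that ``hence'' is exactly what you make explicit: the equivariance $\tilde h(g.q)=g.\tilde h(q)$ (which the paper is implicitly using in the second equality) together with discreteness of the orbit $G.\tilde h(\tilde p)$, which shows $\tilde K$ is open in $\pi^{-1}(K)$ and gives continuity of the inverse. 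Your bijectivity argument is an unpacking of the disjoint-union decomposition, and your discreteness argument for the inverse is precisely what justifies the paper's final assertion; so the two proofs are the same in substance, with yours spelling out the point-set details that the paper leaves to the reader.
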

\begin{proof}
Indeed, to see that $\pi|_{\tilde K}$ is a bijection notice that
$$
\pi^{-1}(K)=\pi^{-1}\big(h^{-1}(h(p))\big)=\tilde h^{-1}\big(\bar\pi^{-1}(h(p))\big)
=\tilde h^{-1}\Big(\bigcup_{g\in G} g.\tilde h(\tilde p)\Big)\\
=\bigcup_{g\in G} g.\tilde K,
$$
where the union over the group of deck transformations is disjoint. Hence each $g.\tilde K$ projects homeomorphically 
onto $K$.
\end{proof}
This lemma together with the preceding commutative diagram provides a certain understanding of the dynamics of 
$\tilde f$ which we summarize below.

\begin{prop}
\label{prop_tilde_f}
Let $\tilde f\colon M\times\tilde N\to M\times\tilde N$ and $\tilde K$ be as above. We equip $M\times\tilde N$ 
with a Riemannian metric lifted from $M\times N$. Then
\begin{itemize}
\item $\tilde f$ is an Anosov diffeomorphism with infinitely many periodic points each of which belongs to the 
$\tilde f$-invariant compact set $\tilde K$;
\item $d_{C^0}(\tilde f, id_M\times \tilde L)<const$;
\item any point $x\notin\tilde K$ escapes to infinity either in positive or in negative time.
\end{itemize}
\end{prop}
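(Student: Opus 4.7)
The plan is to verify the three bullets in turn, using the commutative diagram $\tilde h\circ\tilde f=\tilde L\circ\tilde h$ together with the hyperbolic dynamics of $\tilde L$ on $\tilde N$.

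I would first note that $\tilde K$ is $\tilde f$-invariant, since $\tilde h(\tilde f(\tilde K))=\tilde L(\tilde h(\tilde K))=\tilde L(\{\tilde h(\tilde p)\})=\{\tilde h(\tilde p)\}$, and compact, being homeomorphic via $\pi|_{\tilde K}$ to the compact set $K\subset M\times N$. For the Anosov property of $\tilde f$, I would use that $\pi\colon M\times\tilde N\to M\times N$ is a local isometry for the lifted metric, so the $df$-invariant hyperbolic splitting of $f$ pulls back to a $d\tilde f$-invariant hyperbolic splitting on $M\times\tilde N$ with the same uniform constants from~(\ref{def_anosov}). For infinitely many periodic points in $\tilde K$, I would combine Lemma~\ref{lemma_periodic_points}, which supplies infinitely many $f|_K$-periodic points, with the homeomorphism $\pi|_{\tilde K}\colon\tilde K\to K$: for each $x\in K$ with $f^m(x)=x$, the unique lift $\tilde x\in\tilde K$ satisfies $\tilde f^m(\tilde x)\in\tilde K$ and $\pi(\tilde f^m(\tilde x))=x=\pi(\tilde x)$, so injectivity forces $\tilde f^m(\tilde x)=\tilde x$.

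The second bullet is an immediate consequence of Lemma~\ref{lemma_homotopic_to_identity}: the endpoint $\tilde f$ of the homotopy $H$ is at uniformly bounded $C^0$-distance from $id_M\times\tilde L$. The third bullet is the main content. Given $x\notin\tilde K=\tilde h^{-1}(\tilde h(\tilde p))$, one has $\tilde h(x)\neq\tilde h(\tilde p)$, and the diagram yields $\tilde h(\tilde f^m x)=\tilde L^m(\tilde h(x))$. Two ingredients then close the argument. First, $\tilde L$ is a hyperbolic affine automorphism of the simply-connected nilpotent Lie group $\tilde N$ (explicitly a Lie group automorphism composed with left translation by an element of $G$) whose unique fixed — equivalently, unique periodic — point is $\tilde h(\tilde p)$, and every other orbit leaves every compact subset of $\tilde N$ in either positive or negative time, according to whether the point has a nontrivial unstable or stable coordinate at $\tilde h(\tilde p)$. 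Second, $\tilde h$ is proper: it is equivariant under the cocompact left $G$-actions on $M\times\tilde N$ and on $\tilde N$, so it maps any fundamental domain $D$ to a bounded set $\tilde h(D)\subset\tilde N$, and any compact $C\subset\tilde N$ meets only finitely many $G$-translates of $\tilde h(D)$, making $\tilde h^{-1}(C)$ compact. Properness then transports the escape statement from $\tilde L^m(\tilde h(x))$ back to $\tilde f^m(x)$.

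The main obstacle I anticipate is the escape-to-infinity statement for non-fixed $\tilde L$-orbits on $\tilde N$. In the abelian case $\tilde N=\R^n$ this is elementary linear algebra. For general nonabelian simply-connected nilpotent $\tilde N$, one either uses that the hyperbolic decomposition of the Lie algebra under $d\tilde L$ integrates, via $\exp$, to $\tilde L$-invariant stable and unstable Lie subgroups through which $\tilde N$ admits a global product decomposition, or invokes a Franks-type conjugacy of $\tilde L$ to its linearization. Either route reduces the claim to the linear case.
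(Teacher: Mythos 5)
Your proof is correct and fills in, carefully and in the right order, what the paper presents as a "summary" with no written proof of its own: the paper asserts Proposition \ref{prop_tilde_f} directly as a consequence of the homeomorphism $\pi|_{\tilde K}\colon\tilde K\to K$ and the commutative diagram $\tilde h\circ\tilde f=\tilde L\circ\tilde h$, exactly the ingredients you use. Your treatment of the first two bullets (local isometry gives the Anosov property; the $G$-equivariant homeomorphism $\pi|_{\tilde K}$ lifts periodic points; the bound $d_{C^0}(\tilde f,id_M\times\tilde L)<\mathrm{const}$ is the $t=1$ endpoint of the bound in Lemma \ref{lemma_homotopic_to_identity}) is what the paper intends.

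For the third bullet you go somewhat beyond the paper: you justify the transfer of the escape statement from $\tilde L$ to $\tilde f$ via properness of $\tilde h$, which you derive from $G$-equivariance and cocompactness. This is a genuine clarification — a commented-out draft remark in the source tentatively appeals to $\tilde h$ being "a quasi-isometry," which is not obviously true of a Franks semi-conjugacy, whereas properness is clear for the reason you give. (In fact, for the direction you actually use — escape of $\tilde L^m(\tilde h(x))$ forcing escape of $\tilde f^m(x)$ — plain continuity of $\tilde h$ already suffices, since $\tilde h(\tilde f^m(x))=\tilde L^m(\tilde h(x))$ and a continuous map sends compacts to compacts; properness is a mild overkill but certainly valid.) You also correctly flag that "non-fixed orbits of a hyperbolic affine automorphism of $\tilde N$ escape in forward or backward time" is not quite immediate when $\tilde N$ is non-abelian, and your proposed reduction via the $\tilde L$-invariant stable/unstable Lie subgroups $N^s$, $N^u$ with $\tilde N=N^s\cdot N^u$ (a global diffeomorphism for simply-connected nilpotent groups) is the standard and correct way to handle it. In short: same approach as the paper, with the implicit steps made explicit and one dubious heuristic (quasi-isometry) replaced by a solid one (properness).
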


\subsection{Fiberwise compactification} Our goal now is to change the diffeomorphism $\tilde f$ so that the set 
of periodic points stays the same, and the new map $\hat f$ (which is not necessarily a diffeomorphism) extends 
to the fiberwise compactification $M\times \S^k$, where $k=\dim N$. 

\renewcommand\labelenumi{P\theenumi.}
\begin{lemma}
Let $\tilde f\colon M\times\tilde N\to M\times\tilde N$ and $\tilde K$ be as above. Then there exists a map 
$\hat f\colon M\times\tilde N\to M\times\tilde N$ which has the following properties:
\begin{enumerate}
\item $\hat f$ coincides with $\tilde f$ on a large compact set $B_1$ which contains $\tilde K$;
\item $\hat f$ has infinitely many periodic points each of which belongs to $\tilde K$;
\item Outside of a large compact set $B_2\supset B_1$ 
$$
\hat f(x,y)=(x, \tilde L(y)).
$$
\end{enumerate}
\end{lemma}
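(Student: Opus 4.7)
I would build $\hat f$ as a cut-off interpolation between $\tilde f$ and the model map $\tilde g:=id_M\times\tilde L$. By Lemma~\ref{lemma_homotopic_to_identity} we may write $\tilde f(z)=\exp_{\tilde g(z)}\xi(z)$ for a uniformly bounded continuous section $\xi$ along $\tilde g$. Pick nested compact sets $B_1\subset B_2$ in $M\times\tilde N$ with $B_1\supset\tilde K\cup (M\times\{e_{\tilde N}\})$, and a smooth cut-off $\psi\colon M\times\tilde N\to[0,1]$ with $\psi\equiv 1$ on $B_1$ and $\psi\equiv 0$ off $B_2$; then set
\[
\hat f(z)\;:=\;\exp_{\tilde g(z)}\bigl(\psi(z)\,\xi(z)\bigr).
\]
By construction $\hat f=\tilde f$ on $B_1$ and $\hat f=\tilde g$ off $B_2$, which gives properties (P1) and (P3) immediately. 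All the work is in establishing (P2).

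\medskip

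The central observation is that every periodic point of $\tilde f$ automatically belongs to $\tilde K$. Indeed, if $\tilde f^m(\tilde z)=\tilde z$, then applying $\tilde h$ and using the semiconjugacy $\tilde h\circ\tilde f=\tilde L\circ\tilde h$ yields $\tilde L^m\bigl(\tilde h(\tilde z)\bigr)=\tilde h(\tilde z)$; hyperbolicity of $\tilde L$ (Lemma~\ref{lemma_hyperbolic}) forces $\tilde h(\tilde z)=e_{\tilde N}=\tilde h(\tilde p)$, so $\tilde z\in\tilde K$. Combining this with Lemma~\ref{lemma_periodic_points} and the fact that $\hat f\equiv\tilde f$ on $B_1\supset\tilde K$, I get infinitely many periodic points of $\hat f$ in $\tilde K$, and no periodic point of $\hat f$ lying in $B_1$ can lie outside $\tilde K$.

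\medskip

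It remains to rule out periodic points of $\hat f$ in $B_2\setminus B_1$ or in $B_2^c$. On $B_2^c$ we have $\hat f=\tilde g$, whose periodic set is $M\times\{e_{\tilde N}\}\subset B_1\subset B_2$; so no periodic orbit lies entirely outside $B_2$. For an orbit that straddles several regions I would exploit hyperbolicity of $\tilde L$: writing the Lie algebra $\mathfrak n$ as $\mathfrak n^s\oplus \mathfrak n^u$ with respect to $\tilde L_\ast$, every $y=\exp(v)\in\tilde N\setminus\{e_{\tilde N}\}$ has either $v^u\neq 0$ (forcing $\tilde L^n(y)\to\infty$) or $v^s\neq 0$ (forcing $\tilde L^{-n}(y)\to\infty$); in particular $\tilde g$ has no bounded orbit outside $M\times\{e_{\tilde N}\}$. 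Choosing $B_1,B_2$ large enough, for instance as $\tilde L$-adapted ``boxes'' in stable/unstable product coordinates on $\tilde N$, and refining $\psi$ so that the interpolation respects this splitting, I conclude that no $\hat f$-orbit meeting $B_1^c$ can close up.

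\medskip

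The main obstacle is precisely this last step: ruling out ``transit'' periodic orbits of $\hat f$ that visit the interpolation annulus $B_2\setminus B_1$ or re-enter $B_2^c$ along the stable leaf of $e_{\tilde N}$, since $\tilde g$-orbits on that leaf do return to any neighbourhood of $M\times\{e_{\tilde N}\}$ and a naive ``orbits that leave never come back'' argument is insufficient. The remedy is to exploit the product structure of the stable/unstable splitting of $\tilde L$ when choosing $B_1,B_2$, so that combined with the interpolation and the expansion/contraction of $\tilde L$ on the two factors, an orbit cannot simultaneously close up and visit $B_1^c$.
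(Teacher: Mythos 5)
Your construction is essentially the same as the paper's (a cut-off interpolation between $\tilde f$ and $id_M\times\tilde L$; the paper uses the homotopy $H_t$ directly where you use a geodesic interpolation, but these are interchangeable for the present lemma), and your observation that $\tilde f$-periodic points must already lie in $\tilde K$ --- via $\tilde h\circ\tilde f=\tilde L\circ\tilde h$ and the fact that a hyperbolic $\tilde L$ has $\tilde h(\tilde p)$ as its only periodic point --- is a clean substitute for the paper's appeal to the last bullet of Proposition~\ref{prop_tilde_f}. The gap is exactly the one you flag: you do not actually rule out transit periodic orbits, you only say what kind of choice of $B_1,B_2,\psi$ you \emph{hope} would do it. That is the entire content of P2 that is not trivial, so as written the proof is incomplete.

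The step is not hard once the quantifiers are ordered correctly, and this is where the paper's estimate~(\ref{eq_hat_f}) does the work. Since $\|\psi(z)\xi(z)\|\leq\|\xi(z)\|$, the constant $R:=\sup_z\|\xi(z)\|$ bounds $d_{C^0}(\hat f,\,id_M\times\tilde L)$ \emph{uniformly in the choice of $B_1$, $B_2$ and $\psi$}. Fix $R$ first, then choose $B_1$. Concretely, pick an adapted norm in which $\tilde L$ expands the unstable part and contracts the stable part by a factor $\mu>1$, and set $a_n:=\|(\hat f^n(x,y))^u\|$. The bound gives $a_{n+1}\geq \mu a_n - R$, hence $a_n\geq \mu^n\bigl(a_0 - \tfrac{R}{\mu-1}\bigr)+\tfrac{R}{\mu-1}$: as soon as $a_0>\tfrac{R}{\mu-1}$ the forward orbit escapes to infinity, and symmetrically the backward orbit escapes once the stable coordinate exceeds $\tfrac{R}{\mu-1}$. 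Therefore if $B_1\supset M\times\{\,y:\max(\|y^s\|,\|y^u\|)\leq 2R/(\mu-1)\,\}$, then every point of $B_1^c$ has an unbounded $\hat f$-orbit and cannot be periodic; in particular there are no transit periodic orbits, and any periodic orbit stays in $B_1$, where $\hat f=\tilde f$, so your semiconjugacy observation places it in $\tilde K$. Note there is no need to tailor $\psi$ or the shape of $B_1,B_2$ to the splitting: the only thing that matters is that $R$ does not grow when $B_1$ does. One further caveat for later use: the paper's definition of $\hat f$ via $H_{1-t}$ is deliberately chosen so that in the proof of Lemma~\ref{lem:action-on-cohomology} the interpolation region carries a ready-made proper homotopy $\bar H_s$ to $Id_M\times\tilde L$; if you keep your $\exp$-based interpolation you will have to rebuild that homotopy by hand.
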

%\marginpar{I think we need to be a bit careful with the parametrization of the intermediate region. Also need to check property P2. Will the biLipschitz estimate be useful for this? --J}
\begin{proof}
We first choose a pair of large nested compact sets $D_i \subset \tilde N$, $i=1,2$, chosen so that each
$D_i$ is homeomorphic to $\DD^k$, and such that the intermediate region $D_2 \setminus Int(D_1)$ 
is homeomorphic to $\S^{k-1} \times [0,1]$. We fix an identification $D_2 \setminus Int(D_1)\rightarrow \S^{k-1} \times [0,1]$, giving us a 
coordinate system on the region $D_2\setminus Int(D_1)$. 

The sets $B_i$ will be given by $M\times D_i$.
Recall that $H_t: M\times \tilde N \rightarrow M\times \tilde N$
is the proper homotopy obtained by lifting the homotopy $f\simeq Id\times L$ to the cover $M\times \tilde N$
(parametrized so that $H_0 = Id_M\times \tilde L$ and $H_1 = \tilde f$). 
Now we define the map $\hat f$ via the following formula:
$$
\hat f(x,y)=
\begin{cases} \tilde f(x,y) & \mbox{if } (x,y)\in B_1 \\
H_{1-t}(x, y) & \mbox{if } (x,y)\in B_2  \setminus Int(B_1), y\in \S^{k-1}\times \{t\} \\
(x, \tilde L(y)) & \mbox{if } (x,y)\notin B_2
\end{cases}
$$
Note that on the slice $\S^{k-1}\times \{0\} = \partial B_1$, we have that $H_1 = \tilde f$, while on the slice
$\S^{k-1}\times \{1\} = \partial B_2$, we have $H_0 = Id_M\times \tilde L$. So the above map is indeed continuous.

Properties P1 and P3 are clear from the construction. To verify P2 note that 
\begin{equation}
\label{eq_hat_f}
d(\hat f, id_M\times\tilde L)<R
\end{equation}
where the constant $R$ is independent of the choices which we made while constructing $\hat f$, that is, $R$ is independent of the choice of $B_1$, $B_2$ and the 
coordinate system on $B_2\backslash B_1$. Indeed, this is clear from the estimate on the homotopy $H$ from Lemma~\ref{lemma_homotopic_to_identity}. 

Therefore, estimate~(\ref{eq_hat_f}) guarantees that for sufficiently large $B_1$ any point $(x,y)\notin B_1$ will escape to infinity in positive or negative time under 
$\hat f$, because it escapes to infinity under $id_M\times\tilde L$. It remains to notice that any point $(x,y)\in B_1\backslash\tilde K$ escapes $B_1$ in positive or 
negative time by the last statement in Proposition~\ref{prop_tilde_f} (because $\hat f|_{B_1}=\tilde f|_{B_1}$).
\end{proof}
%We write
%$$
%\tilde f(x,y)=(\varphi_y(x),\psi_x(y)).
%$$
%We look for sets $B_i$ in the  the following form $B_i=M\times \DD_i^k$, $i=1,2$. Let
%$$
%\hat f(x,y)=
%\begin{cases} \tilde f(x,y), \;\;\mbox{if}\;\;\; (x,y)\in B_1 \\
%(x, \psi_x(y)), \;\;\mbox{if}\;\;\; (x,y)\notin B_2
%\end{cases}
%$$
%It remains to define $\hat f$ in the annular region $M\times (\DD^k_2\backslash \DD^k_1)$. To do this we note 
%that the map $\Upphi\colon\partial\DD_1^k\to G(M)$ (here $G(M)$ is the space of self homotopy equivalences) 
%given by $y\mapsto \varphi_y(\cdot)$ is nullhomotopic because it extends to the disk $\DD^k_1$ and each 
%$\varphi_y$ is homotopic to $id_M$ by Lemma~\ref{lemma_homotopic_to_identity}.  Thus the homotopy $\Upphi\sim Id_M$ can be used to extend the 
%definition of $\hat f$ to $B_2\backslash B_1$ (we keep the second coordinate map unchanged). By construction, the resulting map $\hat f$ satisfies P1 and P3. For 
%sufficiently large $B_1$, property P2 can be easily deduced from Proposition~\ref{prop_tilde_f} and the fact that the second coordinate maps of $\tilde f$ and $\hat f$ coincide. 

Consider the space $M\times \S^k$ (where $k=\dim (N)$), obtained by taking the one point compactification of each
$\tilde N$-fiber in $M\times \tilde N$. 
The collection of compactifying points for each of the fibers give a copy $M_\infty$ of $M$, which we call 
the fiber at infinity. 
Since the map $\hat f$ restricted to each fiber $\{p\}\times \tilde N$ coincides with the map $\tilde L$ (outside of a compact set), we
can extend the map on each fiber to the one-point compactification. This gives us a new map, 
denoted $\hat f$, from $M\times \S^k$ to $M\times \S^k$. Note that the new map $\hat f$ restricts to 
the identity on $M_\infty$. We 
further homotope $\hat f$ in a small neighborhood of $M_\infty$ (keeping the $\tilde N$ coordinates constant)
so that the restriction $\hat f|_{M_\infty}$ becomes a time-one map of a gradient flow. Then 
the following Proposition is immediate from P2, the discussion in~Section~\ref{Sec_Lef} and the Poincar\'e-Hopf formula~(\ref{eq_euler}).
\begin{prop}
\label{prop_lefschetz_number}
Let $\hat f\colon M\times\S^k\to M\times\S^k$ be the map constructed above. Then the Lefschetz numbers can 
be calculated as follows
$$
\Lambda(\hat f^m)=\sum_{p\in Fix(\hat f^m)}ind_{\hat f^m}(p)=(-1)^{s}\chi(M)+(-1)^{\dim E^u}\left|Fix(\hat f^m|_{\tilde K})\right|,
$$
where $s$ is the dimension of the stable subspace of $L$. In particular, as $m$ tends to infinity, $\Lambda(\hat f^m)$ is unbounded. (cf. Section~\ref{sec_34}.)
\end{prop}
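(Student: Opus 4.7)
The plan is to apply the classical Lefschetz fixed-point formula to $\hat f^m$ on the closed manifold $M \times \S^k$, and to evaluate $\Lambda(\hat f^m) = \sum_{p \in Fix(\hat f^m)} ind_{\hat f^m}(p)$ by classifying $Fix(\hat f^m)$ and computing local indices type by type. First I would show that all fixed points of $\hat f^m$ lying in the open piece $M \times \tilde N$ belong to $\tilde K$: by property P2, every periodic point of $\hat f$ in $M \times \tilde N$ lies in $\tilde K$, and by property P1 the restriction $\hat f|_{B_1}$ coincides with $\tilde f|_{B_1}$, so $Fix(\hat f^m) \cap (M \times \tilde N) = Fix(\hat f^m|_{\tilde K})$. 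On the added fiber at infinity $M_\infty \cong M$, the construction homotopes $\hat f|_{M_\infty}$ to the time-one map of a gradient flow, whose fixed points are the hyperbolic critical points $(q_1, \infty), \ldots, (q_k, \infty)$ of Morse indices $i_1, \ldots, i_k$.

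For the local indices, each $p \in Fix(\hat f^m|_{\tilde K})$ is a hyperbolic periodic point of the Anosov lift $\tilde f$ whose invariant splitting is oriented and orientation-preserved (reduction R2), so its index is $(-1)^{\dim E^u}$, contributing in total the second summand $(-1)^{\dim E^u} |Fix(\hat f^m|_{\tilde K})|$. At each $(q_j, \infty)$, the derivative of $\hat f^m$ splits as $D\varphi^m(q_j) \oplus D\tilde L^m(\infty)$, so the index factors. The base factor is $(-1)^{i_j}$ by the Morse-theoretic model of the gradient flow. For the fiber factor, I would pass to an inversion chart on $\S^k \cong \tilde N \cup \{\infty\}$: there the extended $\tilde L$ has reciprocal eigenvalues at $\infty$, so its stable and unstable dimensions swap from $(s, k-s)$ to $(k-s, s)$, yielding index $(-1)^s$ (possibly after replacing $f$ by $f^2$ to enforce orientation-preservation of the swapped unstable bundle). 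Summing $(-1)^{i_j+s}$ over $j$ and invoking Poincar\'e--Hopf~\eqref{eq_euler} produces $(-1)^s \chi(M)$, which assembles the claimed identity.

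For the unboundedness assertion, I would use Lemma~\ref{lemma_periodic_points}: $\tilde K$ contains infinitely many periodic points of $\tilde f$ (under $\pi|_{\tilde K}$ they correspond to the periodic points of $f$ in $K$). Taking $m$ to be a common multiple of larger and larger collections of their periods forces $|Fix(\hat f^m|_{\tilde K})| \to \infty$ along a subsequence, so $\Lambda(\hat f^m)$ is unbounded as the $(-1)^s\chi(M)$ contribution is a constant; alternatively, Smale's spectral decomposition applied to the locally maximal hyperbolic set $\tilde K$ gives exponential growth outright. The main obstacle I anticipate is the fiber-at-infinity index: one must justify carefully that $\infty$ is a hyperbolic fixed point of the extended $\tilde L$ with swapped invariant dimensions, and that the orientation conventions are compatible with the standard $(-1)^{\dim E^u}$ rule. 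Once this linear-algebra bookkeeping in an inversion chart is made precise, the rest of the argument is a routine tabulation.
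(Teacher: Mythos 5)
Your approach is essentially the same as the paper's (which treats this Proposition as immediate from properties P1--P3, the index formula in Section~\ref{Sec_Lef}, and Poincar\'e--Hopf): you split $Fix(\hat f^m)$ into the points in $\tilde K$ (where $\hat f$ agrees with the Anosov lift $\tilde f$, giving index $(-1)^{\dim E^u}$) and the points $(q_j,\infty)$ on $M_\infty$ (where the product index formula and Poincar\'e--Hopf give $\chi(M)$ times the fiber index at $\infty$), and you get unboundedness from Lemma~\ref{lemma_periodic_points} together with hyperbolicity of $\tilde K$ forcing periodic points to have unbounded periods. That part is all fine and matches the intended argument.

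Your self-diagnosed ``main obstacle'' is, however, a genuine issue -- and the inversion-chart fix you propose does not actually work when $L$ is a saddle. For a hyperbolic linear saddle $A$ on $\mathbb R^k$ with $0 < s < k$, the one-point compactification $\hat A \colon \S^k \to \S^k$ is \emph{not} differentiable at $\infty$ (in the inversion chart the map is homogeneous of degree one but has direction-dependent radial limits), so $\infty$ is not a hyperbolic fixed point and ``reciprocal eigenvalues with swapped $(s,k-s)$'' has no literal meaning. The clean way to get $ind_\infty(\hat A^m)$ is the global Lefschetz formula on $\S^k$: since $\hat A^m$ acts trivially on $H_0$ and by $\deg(\hat A)^m$ on $H_k$, one has
$ind_\infty(\hat A^m) = 1 + (-1)^k\deg(\hat A)^m - ind_{\,0}(\hat A^m)$,
where $ind_{\,0}(\hat A^m) = \mathrm{sign}\det(I-A^m)$. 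Concretely, for $k=2$, $L=\mathrm{diag}(2,1/2)$, $s=1$, this gives $ind_\infty = 1+1-(-1)=3$, not $(-1)^s=-1$, so the coefficient $(-1)^s$ in the displayed identity (and in your derivation) is not correct in all parities of $(k,s)$. What \emph{is} true, and what the argument actually uses, is that this coefficient is bounded (indeed constant once the relevant iterates preserve orientation), so the $M_\infty$-contribution $\chi(M)\cdot ind_\infty(\hat{\tilde L}^m)$ stays bounded while the $\tilde K$-contribution is unbounded; the ``In particular'' conclusion, and hence the contradiction with Corollary~\ref{cor:Lefschtz-constant}, is unaffected. I would therefore replace the inversion-chart step by the global Lefschetz computation on $\S^k$, and state the $M_\infty$-contribution as $\chi(M)\cdot ind_\infty(\hat{\tilde L}^m)$ rather than $(-1)^s\chi(M)$.
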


\subsection{Lefschetz numbers on $M\times \S^k$}

We have now built a self-map $\hat f: M\times \S^k \rightarrow M\times \S^k$. {As explained in the last section, }
from the construction of $\hat f$, we have
a good understanding of the periodic points. On the other hand, we can easily compute the Lefschetz numbers for the map
$\hat f$ and its powers.

\begin{lemma}\label{lem:action-on-cohomology}
The map $\hat f$ is homotopic to the identity map on $M\times \S^k$. 
%Let $M$ be any closed aspherical manifold, and $\S^k = \mathbb R ^k \cup \{\infty\}$ a sphere of dimension 
%$k\geq 2$. Let 
%$\phi: M\times \S^k \rightarrow M\times \S^k$ be a smooth orientation preserving map with the property
%that, for neighborhoods $\infty\in U_1\subset U_2$, the restriction $\phi|_{M\times U_1}$ maps  $M\times U_1$ into 
%$M\times U_2$, and takes the form $\phi(x, y) = (x, \psi(y))$ for some smooth map $\psi : U_1\rightarrow U_2$.
%Further assume that $\psi(\infty) = \{\infty\}$ (so in particular $\phi$ is the identity map on $M_\infty :=M\times \{\infty\}$),
%and that $\phi^{-1}(M_\infty) = M_\infty$.
%Then $\phi$ is homotopic to the identity map, and hence induces the identity map on all real cohomology groups 
%$H^*(M\times \S^k; \mathbb R)$.
\end{lemma}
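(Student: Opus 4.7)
The plan is to build a homotopy from $\hat f$ to $id_{M\times\S^k}$ in three stages.

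First, I would undo the Morse-type perturbation of $\hat f$ near the fiber at infinity $M_\infty$. Recall that this perturbation was carried out by keeping the $\tilde N$-coordinates fixed and altering only the $M$-coordinate in a small neighborhood of $M_\infty$, so that $\hat f|_{M_\infty}$ becomes the time-one map of a gradient flow on $M$. Since this time-one map is isotopic to $id_M$ through the time-$t$ maps of the flow, I would use this neighborhood as a collar in which to interpolate, producing a homotopy from $\hat f$ to a map $\hat f_0$ which fixes $M_\infty$ pointwise and agrees with $\hat f$ elsewhere.

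Second, I would homotope $\hat f_0$ to $id_M\times\bar L$, where $\bar L\colon\S^k\to\S^k$ denotes the one-point extension of the Lie group automorphism $\tilde L$ (so $\bar L(\infty)=\infty$). Using the coordinate $t\in[0,1]$ on $B_2\setminus Int(B_1)$ fixed when constructing $\hat f$, I would define, for $s\in[0,1]$,
$$
\hat F_s(x,y)=\begin{cases} H_{1-s}(x,y), & (x,y)\in B_1,\\ H_{(1-s)(1-t)}(x,y), & (x,y)\in B_2\setminus Int(B_1),\ y\in\S^{k-1}\times\{t\},\\ (x,\bar L(y)), & (x,y)\notin B_2. \end{cases}
$$
At $s=0$ one recovers $\hat f_0$; at $s=1$ one obtains $id_M\times\bar L$ everywhere, since $H_0=id_M\times\tilde L$. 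Continuity across $\partial B_1$ and $\partial B_2$ follows from the matching of endpoints of the homotopy $H$ given by Lemma~\ref{lemma_homotopic_to_identity}, and continuity across $M_\infty$ is immediate because outside $B_2$ the formula already extends to the fiberwise compactification.

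Third, I would show that $\bar L\simeq id_{\S^k}$. From the orientation reduction R2, together with the homotopy from $f$ to the model map $\bar f(x,y)=(x,\rho(x)L(y))$, the induced automorphism $L\colon N\to N$ is orientation preserving, so $\tilde L$ is an orientation-preserving diffeomorphism of the Euclidean space $\tilde N$. Being a diffeomorphism it is proper, and its one-point extension $\bar L$ therefore has degree $+1$ on $\S^k$. The Hopf degree theorem then yields $\bar L\simeq id_{\S^k}$, so that $id_M\times\bar L\simeq id_{M\times\S^k}$. Composing the three homotopies gives the claim.

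I expect the second stage to be the main obstacle: some care is needed to verify that the piecewise formula for $\hat F_s$ defines a continuous homotopy extending all the way across the fiber at infinity. The key observation is that the coordinates on $B_2\setminus Int(B_1)$ were chosen precisely so that the interpolation ``turns off'' at $\partial B_2$, matching $H_0=id_M\times\tilde L$, which is the formula used in the outer region and which extends continuously across $M_\infty$.
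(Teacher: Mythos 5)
Your proposal is correct and follows essentially the same route as the paper: undo the Morse-type perturbation near $M_\infty$, run the homotopy $H$ backwards on $B_1$, damp it out across the collar $B_2\setminus Int(B_1)$ using the product coordinate $t$ (your $\hat F_s$ is exactly the paper's homotopy $\bar H$ with the parameter reversed, $\hat F_s=\bar H_{1-s}$), and then compose with a fiberwise homotopy from the compactification of $\tilde L$ to the identity. One small point in your favor: the paper dismisses the last step with ``we note that $\tilde L$ is properly homotopic to $Id_{\tilde N}$'' without comment, whereas you supply the needed justification via orientation. Your argument that $L$ is orientation-preserving can be tightened slightly: $f$ is orientation-preserving by R2, hence $\deg f=+1$; since $\bar f(x,y)=(x,\rho(x)L(y))$ is homotopic to $f$ and is itself a diffeomorphism, it too is orientation-preserving; its differential is block-triangular with diagonal blocks $\mathrm{Id}_{TM}$ and (left translation)$\circ dL$, and left translations on the connected group $\tilde N$ are orientation-preserving, so $\det(dL)>0$. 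From there your degree/Hopf argument for $\bar L\simeq id_{\S^k}$ is exactly right.
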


\begin{proof}
Reversing the small homotopy near $M_\infty$, we can assume that $\hat f$ coincides with $Id_M\times \tilde L$
on the complement of $B_2$ (and is the identity on $M_\infty$). Next we define a homotopy $\bar H$ on the
space $M\times \tilde N \subset M\times \S^k$ as follows:
$$
\bar H_s(x,y)=
\begin{cases} H_s(x,y), \;\;\mbox{if}\;\;\; (x,y)\in B_1 \\
H_{s(1-t)}(x, y), \;\;\mbox{if}\;\;\; (x,y)\in B_2  \setminus Int(B_1), y\in \S^{k-1}\times \{t\} \\
H_0(x, y), \;\;\mbox{if}\;\;\; (x,y)\notin B_2
\end{cases}
$$
Let us observe that $\bar H_0 = H_0 = Id_M\times \tilde L$, while $\bar H_1 = \hat f$. Moreover this 
is a proper homotopy, whose support is entirely contained in the compact set $B_2$. So it extends to the fiberwise
one-point compactification, giving us a homotopy from the self-map $\hat f$ to the map 
consisting of the fiberwise one-point compactification of $Id_M\times \tilde L$. Finally, we note that $\tilde L$ is properly
homotopic to $Id_{\tilde N}$. Composing with these (fiber-wise) homotopies, we obtain the desired homotopy, completing
the proof of the Lemma.
\end{proof}

As an immediate consequence, we obtain the following.

\begin{cor}\label{cor:Lefschtz-constant}
For all $m\geq 0$, we have that $\Lambda (\hat f ^m) = \Lambda(\text{Id}|_{M\times \S^k}) = \chi(M)\chi(\S^k)$ is a constant,
independent of $m$.
\end{cor}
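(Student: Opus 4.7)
The plan is to simply combine Lemma \ref{lem:action-on-cohomology} with the homotopy invariance of the Lefschetz number and the Künneth formula for Euler characteristics.

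First I would note that since $\hat f$ is homotopic to $\text{Id}_{M\times \S^k}$ by Lemma~\ref{lem:action-on-cohomology}, any iterate $\hat f^m$ is also homotopic to the identity (concatenate $m$ copies of the homotopy, or equivalently observe that $(\text{Id})^m = \text{Id}$). In particular, $\hat f^m$ induces the identity map on $H^*(M\times \S^k; \mathbb R)$ for every $m\ge 0$.

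Next, the Lefschetz number depends only on the induced maps on (rational) cohomology, so
$$
\Lambda(\hat f^m) = \Lambda(\text{Id}_{M\times \S^k}) = \chi(M\times \S^k),
$$
the last equality being the classical fact that the Lefschetz number of the identity map on a closed manifold equals its Euler characteristic. Finally, by the Künneth formula, $\chi(M\times \S^k) = \chi(M)\chi(\S^k)$, which yields the stated formula.

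There is essentially no obstacle here: all the substantive content was packaged into Lemma~\ref{lem:action-on-cohomology}. The corollary is a routine consequence once one knows $\hat f\simeq \text{Id}$. The only thing to be slightly careful about is that $M\times \S^k$ is indeed a closed manifold (so that Lefschetz applies and equals the Euler characteristic), but this is immediate from the fact that $M$ is closed and $\S^k$ is a sphere. Thus the proof is only a few lines long.
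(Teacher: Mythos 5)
Your proof is correct and matches the paper's reasoning: the corollary is stated there as an immediate consequence of Lemma~\ref{lem:action-on-cohomology}, via homotopy invariance of the Lefschetz number and $\Lambda(\mathrm{Id}) = \chi(M\times \S^k) = \chi(M)\chi(\S^k)$.
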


But this contradicts the computation of the Lefschetz number obtained in Proposition \ref{prop_lefschetz_number},
completing the proof of the {\bf Main Theorem}.

\section{Applications and concluding remarks}\label{sec:conclusion}

\subsection{Concrete examples}\label{sec_examples}

We now prove Corollary \ref{main-corollary}. We first consider the case of a single factor. 
Let $M^n$ ($n\geq 3$) be either (a) an aspherical manifold with Gromov hyperbolic 
fundamental group, or (b) a higher rank locally symmetric space with no local $\mathbb R$ or $\mathbb H^2$-factors. 
We need to verify that the fundamental group $\Gamma$ of $M$ satisfies conditions (i)-(iii) in the statement of our 
{\bf Main Theorem}.

Property (i) states that $\Gamma$ is Hopfian. For torsion-free Gromov hyperbolic groups, this is a deep result of
Sela \cite{Sela}. For higher rank locally symmetric spaces, the fundamental group is linear (i.e. has a 
faithful finite-dimensional complex representation). But Mal'cev \cite{Mal2} 
showed that finitely generated linear groups are Hopfian.

Property (ii) states that $\text{Out}(\Gamma)$ is finite. For lattices in higher-rank semisimple Lie groups, this is a
direct consequence of Mostow's rigidity theorem: $\text{Out}(\Gamma)$ coincides with the compact Lie group
$\text{Isom}(M)$. But the latter must be $0$-dimensional (hence finite), for otherwise the action of a $1$-parameter subgroup 
would give a local $\mathbb R$-factor in $M$. In the case where $\Gamma$ is Gromov hyperbolic, the work of 
Paulin \cite{Pa} and Bestvina and Feighn \cite{BF} reduce the problem to deciding whether the boundary at infinity 
$\partial ^\infty \Gamma$ has any local cutpoints (see the discussion in the proof of Corollary \ref{cor-yano}).
But Bestvina has shown that $\partial ^\infty \Gamma$ is a homology manifold of dimension $n-1\geq 2$ 
\cite[Theorem 2.8]{Be}.
Thus we can find arbitrarily small open neighborhoods $U$ around any given point $p\in \partial ^\infty \Gamma$, 
with the property that $H_1(U, U\setminus \{p\})\cong H_0(U, U \setminus \{p\}) \cong 0$. The homology long exact 
sequence for the pair $(U, U\setminus \{p\})$ then shows that $U$ and $U\setminus \{p\}$ have the same number of
connected components, and hence $p$ cannot be a local cutpoint.

Finally, property (iii) concerns the intersection of maximal nilpotent subgroups of $\Gamma$. When $\Gamma$ is
torsion-free Gromov hyperbolic, these are precisely the (maximal) cyclic subgroups of $\Gamma$. 
Since $M$ has dimension
$>1$, $\Gamma$ is not virtually cyclic, so must contain a free subgroup. In particular, we have a pair of cyclic 
subgroups that intersect trivially, verifying (iii). In the case where $M$ is an irreducible higher rank locally 
symmetric space, the intersection of all the maximal nilpotent subgroups forms 
a normal subgroup of $\Gamma=\pi_1(M)$. From Margulis' normal subgroup theorem 
(see \cite[Ch. IV]{margulis}, or \cite[Ch. 8]{zimmer})
this intersection is either finite, or 
finite index. Since the intersection is nilpotent, it cannot be finite index in $\Gamma$, hence it must be finite. Since
$\Gamma$ is torsion-free, it follows that it is trivial.

\vskip 5pt

Now we pass to the case where $M= M_1\times \cdots \times M_k$ is a product of manifolds of the above type.
We again need to verify that the fundamental group $\Gamma = \Gamma_1 \times \cdots \times \Gamma_k$ of
such a manifold verifies the algebraic conditions (i)-(iii) from our {\bf Main Theorem.} 
For (iii) we observe that a product of nilpotent groups is nilpotent, and the homomorphic image of 
a nilpotent group is nilpotent. It follows that the maximal nilpotent subgroups
in $\Gamma$ are precisely the products of the maximal nilpotent subgroups in the individual $\Gamma_i$. From
this, one can see that the intersection of the maximal nilpotent subgroups in $\Gamma$ coincides with the product
of the intersection of the maximal nilpotents in the individual $\Gamma_i$. But we saw earlier that the later are all
trivial, which shows property (iii).

Properties (i) and (ii) are harder to establish. As a first step, let us analyze 
surjective homomorphisms $\phi: \Gamma \rightarrow \Gamma$. By an abuse of notation, 
we will also denote by $\Gamma_i$ the subgroup of $\Gamma$ consisting of all tuples $(g_1, \ldots, g_k)$ 
with $g_j=id$ when $j\neq i$. Restricting the homomorphism $\phi$ to the component $\Gamma_i$, we can 
express $\phi|_{\Gamma_i} = (\phi_{i1}, \ldots ,\phi_{ik})$, where each $\phi_{ij}: \Gamma_i \rightarrow \Gamma_j$ 
is a homomorphism (not necessarily surjective). The homomorphism $\phi$ is then completely determined by the
collection of homomorphisms $\phi_{ij}$ (where $1\leq i, j \leq k$), since we have the formula:
$$\phi\big( (g_1, \ldots , g_k)\big) = \Big( \prod_{i=1}^k\phi_{i1}(g_i), \prod_{i=1}^k\phi_{i2}(g_i), \cdots , 
\prod_{i=1}^k\phi_{ik}(g_i)\Big).$$
Note that, in the expression above, the products are unambiguous: since the subgroups $\Gamma_i$ inside
$\Gamma$ all commute, their $\phi$-images also commute, so the order in which the product is taken does not
matter. We now define the image subgroups $\Lambda_{ij}:= \phi_{ij}(\Gamma_i) \leq \Gamma_j$. 
Since $\Gamma_i$ is normal inside 
$\Gamma$, and the morphism $\phi$ is surjective, each of the image subgroups $\Lambda_{ij}$ are normal inside 
the corresponding $\Gamma_j$. 

\vskip 10pt

We will now try to understand the individual $\Lambda_{st}$. If $\Gamma_t$ is an irreducible lattice in a higher 
rank semisimple Lie group, then from Margulis' normal subgroup theorem, it follows that each $\Lambda_{rt}$ is either
finite (whence trivial) or finite index in $\Gamma_t$. We claim exactly one of these is finite index (and must hence have 
index one). Indeed, if we have two distinct $\Lambda_{rt}$ and $\Lambda_{st}$ 
($r\neq s$) both of which are finite index, then their intersection $\Lambda_{rt}\cap \Lambda_{st}$ has finite
index in $\Gamma_t$. Since $\Gamma_r, \Gamma_s$ commute in $\Gamma$, their images commute inside
$\Gamma_t$, and thus this intersection must be an abelian subgroup. So one obtains that $\Gamma_t$ must be
virtually abelian, a contradiction.

Next let us assume that $\Gamma_t$ is torsion-free Gromov hyperbolic. Again, since the image subgroups $\Lambda_{rt}$ 
and $\Lambda_{st}$ commute inside $\Gamma_t$, we have that $\Lambda_{rt}$ is actually a subgroup of the centralizer
$\text{Cent}_{\Gamma_t}(\Lambda_{st})$. We will need the following:

\vskip 5pt

\begin{lemma}\label{centralizer} Let $H$ be a subgroup in a torsion-free Gromov hyperbolic group $G$,
\renewcommand\labelenumi{(\theenumi)}
\begin{enumerate}
\item if the subgroup $H$ is trivial, the centralizer is the whole group $G$.
\item if the subgroup $H$ is (infinite) cyclic, then every element in the centralizer generates a cyclic subgroup
which intersects $H$ non-trivially.
\item if $H$ is not cyclic, then the centralizer is trivial. 
\end{enumerate}
\end{lemma}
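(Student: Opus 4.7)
The strategy is to reduce everything to the standard structural fact that in a torsion-free Gromov hyperbolic group $G$, every non-identity element has infinite order, and the centralizer of any non-identity element is infinite cyclic; equivalently, every abelian subgroup of $G$ is infinite cyclic. This is classical (see Gromov's original essay or Bridson--Haefliger) and is the only non-trivial input I would need. With it in hand, all three statements follow from one-line arguments.

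Part (1) is immediate from the definition of centralizer. For part (2), I would take a generator $h$ of $H$ and a non-identity element $g$ of $\text{Cent}_G(H)$. Both $g$ and $h$ lie in $\text{Cent}_G(h)$, which by the structural fact is infinite cyclic; choosing a generator $k$, we can write $g = k^a$ and $h = k^b$ for non-zero integers $a,b$, and then $g^b = k^{ab} = h^a$ is a non-identity element of $\langle g \rangle \cap H$, as desired.

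Part (3) is the substantive case. The key step is to produce two non-commuting elements $a, b \in H$. This follows by contradiction: if every pair of elements of $H$ commuted, $H$ would be abelian, hence infinite cyclic by the structural fact, contradicting the hypothesis that $H$ is not cyclic. With such $a, b$ fixed, suppose for contradiction that some non-identity $g$ lies in $\text{Cent}_G(H)$. Then $a$ and $b$ both commute with $g$, so both belong to $\text{Cent}_G(g)$, which is infinite cyclic and therefore abelian. This forces $ab = ba$, contradicting our choice of $a, b$. Hence $\text{Cent}_G(H) = \{1\}$.

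The main obstacle, if one insisted on a self-contained proof, would be establishing the cyclic-centralizer fact itself; taken as a black box (which is standard in the torsion-free Gromov hyperbolic setting), the lemma essentially writes itself, and no further delicate argument is required.
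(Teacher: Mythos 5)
Your proof is correct, and it takes a genuinely shorter route to part (3) than the paper does. Both arguments ultimately rest on the same basic structure theory (abelian subgroups of a torsion-free hyperbolic group are cyclic, equivalently there are no $\mathbb{Z}^2$-subgroups), but you invoke the stronger packaged fact that the \emph{centralizer} of any non-identity element is infinite cyclic, whereas the paper only uses the no-$\mathbb{Z}^2$ property directly. Concretely, for part (3) the paper first finds $a,b\in H$ not jointly contained in a $\mathbb{Z}$-subgroup, then invokes the Tits alternative (via \cite[Prop.~3.20]{bridson-haefliger}) to replace them by powers $a^k,b^k$ generating a free group, and finally derives a contradiction by showing a non-trivial $z$ in the centralizer forces $\langle a^k\rangle\cap\langle b^k\rangle$ to be infinite cyclic in that free group. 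Your version skips the Tits alternative entirely: pick non-commuting $a,b\in H$ (which exist because otherwise $H$ would be abelian hence cyclic), and observe that any non-identity $z$ in $\mathrm{Cent}_G(H)$ would put both $a$ and $b$ inside the infinite cyclic (hence abelian) group $\mathrm{Cent}_G(z)$, a contradiction. This is cleaner and avoids the free-subgroup detour, at the cost of quoting the (standard, but slightly deeper) cyclic-centralizer theorem rather than just ``no $\mathbb{Z}^2$.'' Your part (2) is likewise correct and essentially the same as the paper's, just phrased through the centralizer $\mathrm{Cent}_G(h)$ being cyclic rather than through $\langle x,y\rangle\cong\mathbb{Z}$.
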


\begin{proof}
Statement (1) is obvious. If $H$ contains an element $x$ of infinite order, then any element $y\in \text{Cent}_G(H)$ in the 
centralizer must 
commute with $x$. Since $G$ cannot contain any $\mathbb Z^2$-subgroups, this implies that the torsion-free abelian subgroup 
$\langle x,y \rangle$ generated by the two elements is isomorphic to $\mathbb Z$. If $H$ is infinite cyclic, this immediately
gives (2). 

Next, if $H$ is {\it not} infinite cyclic, then it contains two elements $a,b$ which do not jointly
lie inside a $\mathbb Z$ subgroup (recall that a finitely generated locally cyclic group is cyclic). From the Tits' alternative,
there exists a $k>0$ with the property that $\langle a^k, b^k\rangle$ is a free group (see \cite[Prop. 3.20, pg. 467]{bridson-haefliger}). 
Now if $z$ is an arbitrary non-trivial element in the centralizer of $H$, then from the discussion above, 
$\langle z\rangle \cap \langle a^k\rangle$ and $\langle z\rangle \cap \langle b^k \rangle$ both have finite index 
inside $\langle z \rangle$, so $\langle a^k \rangle \cap \langle b^k\rangle \cong \mathbb Z$ inside the free
group $\langle a^k, b^k\rangle$, a contradiction. 
This establishes statement (3).
\end{proof}

We now finish our analysis of the possible $\Lambda_{rt}$ when $\Gamma_t$ is Gromov hyperbolic. We first
note that none of the $\Lambda_{rt}$ can be infinite cyclic. Indeed, if $\Lambda_{rt}$ is infinite cyclic, then from 
the discussion above, we obtain that all the $\Lambda_{st}$ ($s\neq r$) lie inside the centralizer of
$\Lambda_{rt}$. Hence, the subgroup spanned by them also lies inside the centralizer. But by 
surjectivity of the map $\phi$, this means that the entire (finitely generated) group $\Gamma_t$ coincides
with the centralizer.  Property (2) in Lemma \ref{centralizer} would then imply that $\Gamma_t$ is a torsion-free $2$-ended group (i.e. 
$\partial ^\infty \Gamma_t$ consists of just two points), so must be isomorphic
to $\mathbb Z$. This is impossible, as $\Gamma_t$
is the fundamental group of an aspherical manifold of dimension $>1$. 

Having showed that none of the $\Lambda_{rt}$ are {\it infinite} cyclic, we now note that {\it at least one} of
the $\Lambda_{rt}$ must be non-cyclic. For otherwise all the $\Lambda_{rt}$would be finite cyclic, and hence trivial,
contradicting the surjectivity of $\phi$. So assume $\Lambda_{rt}$ is not cyclic. Then from statement (3) in Lemma \ref{centralizer}, 
one sees that all the remaining $\Lambda_{st}$ ($s\neq t$) must be trivial. Finally, surjectivity of $\phi$ then forces that 
$\Lambda_{rt} = \Gamma_t$. We summarize the discussion so far in the following:

\vskip 5pt

\noindent {\bf Fact:} For each $j$, there is a unique $1\leq \tau(j) \leq k$ with the property that
$$\Lambda_{ij} = \begin{cases}
\Gamma_j & i=\tau(j) \\
1 & i\neq \tau(j) \\
\end{cases}$$

\vskip 5pt

\noindent Since we only have finitely many factors in $\Gamma$, surjectivity of $\phi$ implies that $\tau$ is a bijection, 
i.e. defines an element in the symmetric group $\text{Sym}(k)$. We denote by $\sigma := \tau ^{-1}$ the inverse
permutation. 

Let us first show the Hopfian property (i). From the discussion above, to any surjective homomorphism $\phi: \Gamma
\rightarrow \Gamma$, we have associated a permutation $\sigma\in \text{Sym}(k)$, which encodes how the surjective 
homomorphism permutes the direct factors in $\Gamma$. It follows that a high enough power $\phi^r:\Gamma\rightarrow
\Gamma$ is a surjective homomorphism which preserves each factor, and hence induces a surjection from each
factor to itself. As we know each factor is Hopfian, it follows that $\phi^r$ is injective, and hence $\phi$ must also be 
injective. Thus $\phi$ is an isomorphism, and we conclude that $\Gamma$ is Hopfian, verifying (i).

For (ii), we note that the above discussion gives us a well-defined homomorphism $\text{Aut}(\Gamma)\rightarrow 
\text{Sym}(k)$. Since an inner automorphism preserves each factor, this descends to a homomorphism 
$\text{Out}(\Gamma) \rightarrow \text{Sym}(k)$. The image of this homomorphism lies in a finite group, while the
kernel can easily be seen to be $\text{Out}(\Gamma_1) \times \cdots \times \text{Out}(\Gamma_k)$. Since each
of the $\text{Out}(\Gamma_i)$ are finite, we obtain that $\text{Out}(\Gamma)$ is also finite, establishing (ii). This 
concludes the proof of Corollary \ref{main-corollary}.

\begin{remark}
The obstruction mentioned in Lemma \ref{lem:finite-out} does not apply to the examples in our 
Corollary \ref{main-corollary}. Indeed, the outer automorphism group of $\pi_1(M\times N)$ contains the outer
automorphism group of $\pi_1(N)$. If $N$ is an infranilmanifold supporting an Anosov diffeomorphism, then the
later has infinite order, and hence so does $\text{Out}(\pi_1(M\times N))$.
\end{remark}

\subsection{Open problems}

There are a number of interesting questions we encountered on this project.
Concerning Smale's question, it seems like the following special case is still open:

\begin{problem}
Let $M$ be a compact non-positively curved locally symmetric manifold and let $f\colon M\to M$ be an Anosov diffeomorphism. Then a finite cover of $M$ is a torus.
\end{problem}

The crucial difficulty in resolving this problem seems to be the following:

\begin{problem}
Show that the product of two surfaces at least one of which has genus $\ge 2$ does not support an Anosov diffeomorphism.
\end{problem}

Finally, regarding Lemma \ref{lem:finite-out}, it seems quite hard to produce high-dimensional aspherical manifolds 
whose fundamental group has infinite outer automorphism group. While several classes of aspherical manifolds 
are known to have fundamental groups with infinite outer automorphism group (see e.g. \cite{DM}, \cite{MS}, \cite{NP}, 
\cite[Sections 5.2, 5.4]{FLS}), they all appear to contain $\mathbb Z^2$-subgroups.
In particular, we do not know the answer to the following:

\begin{question}
Let $M$ be an aspherical manifold of dimension $\geq 3$, and assume $\Gamma:= \pi_1(M)$ contains no 
$\mathbb Z^2$-subgroup. Does it follow that $\text{Out}(\Gamma)$ is finite?
\end{question}

\begin{remark}
This question has a positive answer for 3-dimensional manifolds. 
{
Indeed, one notes that such a manifold $M$ is aspherical and atoroidal. Hence $M$ is prime, and has trivial JSJ 
decomposition. From geometrization, it follows that $M$ must be modeled on one of the eight $3$-dimensional 
geometries: $\mathbb H^3$, $\mathbb E^3$, $\mathbb S^3$, $\mathbb S^2\times \mathbb E^1$, 
$\mathbb H^2\times \mathbb E^1$, $\widetilde{PSL}(2,\mathbb R)$, $NIL$, or $SOL$. Since $M$ is aspherical,
we can rule out the two model geometries $\mathbb S^3$ and $\mathbb S^2\times \mathbb E^1$. 
And since $\pi_1(M)$ has no $\mathbb Z^2$-subgroup, we can rule out the five model geometries $\mathbb E^3$, 
$\mathbb H^2\times \mathbb E^1$, $\widetilde{PSL}(2,\mathbb R)$, $NIL$, and $SOL$. Thus $M$ supports a 
hyperbolic metric, and finiteness of $\text{Out}(\Gamma)$ follows from Mostow rigidity. }

\end{remark}

\begin{remark}
{
A famous question of Gromov asks whether CAT(0) groups with no $\mathbb Z^2$-subgroup are Gromov hyperbolic. 
An affirmative answer to Gromov's question would then imply, by a standard argument (see Corollary \ref{cor-yano}
and the discussion at the beginning of Section \ref{sec_examples}) an affirmative answer to our question as well.  
}
\end{remark}

\end{document}